\documentclass[12pt]{article}

\usepackage[T1]{fontenc}
\usepackage[english]{babel}
\usepackage{amsmath,amssymb,amsthm}
\usepackage{newtxtext,newtxmath}
\usepackage{bm}
\usepackage{cite}
\usepackage{microtype}
\usepackage[margin=1in]{geometry}
\usepackage{float}
\usepackage{booktabs}
\usepackage{xcolor}
\usepackage{tikz}
\usetikzlibrary{calc,arrows.meta,positioning,decorations.pathreplacing,fit}
\usepackage{indentfirst}
\usepackage[colorlinks=true,linkcolor=blue,citecolor=red]{hyperref}
\hypersetup{
	pdftitle={Characteristic Polynomials of Graph- and Digraph-Deleted Catalan Arrangements},
	pdfauthor={Yanru Chen, Ang Li, and Suijie Wang},
	pdfsubject={Hyperplane arrangements and characteristic polynomials},
	pdfkeywords={Catalan arrangements, finite field method, graphical Stirling numbers, directed path covers, gain graphs}
}
\allowdisplaybreaks[4]

\definecolor{figblue}{RGB}{45,92,150}
\definecolor{figgreen}{RGB}{72,135,94}
\definecolor{figgold}{RGB}{176,125,38}
\definecolor{figred}{RGB}{166,72,58}
\definecolor{figgray}{RGB}{90,90,90}
\tikzset{
	fig arrow/.style={-{Latex[length=2.2mm]}, thick, figgray},
	fig block/.style={draw=figblue, very thick, rounded corners=2pt,
		align=center, font=\small, minimum height=6mm, fill=figblue!12},
	fig gap/.style={draw=figgold, thick, rounded corners=2pt,
		align=center, font=\small, minimum height=6mm, fill=figgold!18},
	fig vertex/.style={circle, draw=figgray, thick, fill=white,
		inner sep=1.2pt, minimum size=5.5mm, font=\small}
}

\newtheorem{theorem}{Theorem}[section]
\newtheorem{maintheorem}{Theorem}

\newtheorem{lemma}[theorem]{Lemma}
\newtheorem{prop}[theorem]{Proposition}
\newtheorem{coro}[theorem]{Corollary}
\theoremstyle{definition}
\newtheorem{defi}[theorem]{Definition}
\newtheorem{example}[theorem]{Example}
\theoremstyle{remark}
\newtheorem*{remark*}{Remark}

\newcommand{\A}{\mathcal A}
\newcommand{\C}{\mathcal C}
\newcommand{\Ish}{\operatorname{Ish}}
\newcommand{\Stir}{\operatorname{Stir}}
\newcommand{\pathnum}{\operatorname{path}}
\newcommand{\pc}{\operatorname{pc}}
\newcommand{\Path}{\operatorname{Path}}

\numberwithin{equation}{section}

\begin{document}
\enlargethispage{1.7\baselineskip}
\vspace*{-0.55em}
\begin{center}
	{\LARGE\bfseries
		Characteristic Polynomials of Graph- and\\[4pt]
		Digraph-Deleted Catalan Arrangements\par}
	\vspace{0.75em}
	Yanru Chen\(^{1}\)\quad Ang Li\(^{2}\)\quad Suijie Wang\(^{3}\)\\[4pt]
	\(^{1,2,3}\)School of Mathematics, Hunan University\\
	Changsha 410082, Hunan, P. R. China\\[3pt]
	{\footnotesize Correspondence:
	\(^{1}\)yanruchen@hnu.edu.cn, \(^{2}\)leonli@hnu.edu.cn, \(^{3}\)wangsuijie@hnu.edu.cn}
\end{center}
\vspace{0.15em}
\begin{center}
\begin{minipage}{0.90\textwidth}
\small
	\noindent\textbf{Abstract.}
	We develop a finite-field stratification for characteristic polynomials of
	deletion subarrangements of the full \(m\)-Catalan arrangement.  It reduces the
	count to cyclic placements of rigid blocks and yields falling-factorial
	expansions for deletions indexed by graphs, digraphs, and gain-labeled
	digraphs.  The coefficients are graphical Stirling numbers for zero-layer
	deletions, directed matching numbers when the deleted layer \(\ell\) satisfies
	\(1\le \ell\le\lfloor m/2\rfloor\), directed path-cover numbers when
	\(\lfloor m/2\rfloor<\ell\le m\), and admissible gain-labeled arc sets for
	multilayer deletions.  For \(\ell=m\), a complementary path-cover expansion
	yields factorization consequences.  The method also gives formulas for
	directed Ish-type arrangements in terms of path-cycle covers and outdegrees.
	
	\vspace{0.35em}
	\noindent\textbf{Keywords.}
	Hyperplane arrangements; Catalan arrangements; characteristic polynomials;
	finite field method; graphical Stirling numbers; directed path covers; gain
	graphs.
	
	\vspace{0.15em}
	\noindent
	\textbf{2020 Mathematics Subject Classification.}
	Primary 52C35; Secondary 05C20, 05C30.
\end{minipage}
\end{center}

\section{Introduction}
Hyperplane arrangements whose hyperplanes have the form \(x_i-x_j=s\) are
integral deformations of the braid arrangement; for general background on
hyperplane arrangements, see~\cite{orlikTerao1992,stanley2007}.  Classical
examples include the Catalan, Shi, semiorder, and Ish arrangements
\cite{shi1986,stanley1996,armstrong2012}.  Postnikov and Stanley developed a
general framework for such Coxeter-arrangement deformations, and Bernardi later
gave tree formulas for regions and for characteristic and coboundary polynomials
of integer braid deformations
\cite{postnikov2000,bernardi2018}.  From the gain-graph and affinographic
viewpoints, the characteristic polynomial over a finite field counts vertex
labelings avoiding prescribed differences
\cite{zaslavsky1989,forge2007,berthome2009}.

We study three classes of deletion subarrangements of the full \(m\)-Catalan
arrangement: zero-layer deletions indexed by a graph, single translated-layer
deletions indexed by a digraph, and multilayer deletions indexed by
gain-labeled digraphs.  Cyclic block-and-gap enumeration produces coefficients
with direct graph-theoretic interpretations.

Throughout, \(n\) is a positive integer and \([n]=\{1,2,\dots,n\}\).
	We use the following graph- and digraph-deleted Catalan arrangements.
	Let
	\[
	\mathcal B_n
	=
	\{x_i-x_j=0\mid 1\le i<j\le n\}
	\]
	be the braid arrangement.  For an integer \(m\ge0\), let
	\[
	\mathcal C_n^m
	=
	\mathcal B_n
	\cup
	\{x_i-x_j=s\mid 1\le i\ne j\le n,\ 1\le s\le m\}
	\]
	be the full \(m\)-Catalan arrangement.  For a simple graph
	\(G=([n],E(G))\), define the graph-deleted Catalan arrangement
	\[
	\C_n^m(G)
	=
	\mathcal C_n^m
	\setminus
	\{x_i-x_j=0\mid \{i,j\}\in E(G)\}.
	\]
	For a simple digraph \(\Gamma=([n],A(\Gamma))\) and
	\(1\le \ell\le m\), define the \(\ell\)-layer digraph-deleted Catalan
	arrangement
	\[
	\C_n^{m,\ell}(\Gamma)
	=
	\mathcal C_n^m
	\setminus
	\{x_i-x_j=\ell\mid (i,j)\in A(\Gamma)\}.
	\]
	For \(m\ge1\), more generally, for \(S\subseteq[m]\) and simple digraphs
	\(\Gamma_\bullet=(\Gamma_s)_{s\in S}\), with
	\(\Gamma_s=([n],A_s)\), define
	\[
	\C_n^{m,S}(\Gamma_\bullet)
	=
	\mathcal C_n^m
	\setminus
	\{x_i-x_j=s\mid s\in S,\ (i,j)\in A_s\}.
	\]

For a hyperplane arrangement \(\A\), let \(L(\A)\) be the intersection poset
of nonempty intersections of subfamilies of \(\A\), ordered by reverse
inclusion, with minimum element \(\hat 0\) equal to the ambient space.  If
\(\mu\) denotes the M\"obius function of \(L(\A)\), we write
\[
\chi(\A,t)=\sum_{X\in L(\A)}\mu(\hat 0,X)t^{\dim X}
\]
for the characteristic polynomial of \(\A\).

Throughout the paper, \(G\) is a simple graph and \(\bar G\) denotes its usual
complement on the same vertex set.  We write \(\emptyset_n\) for the empty graph
on vertex set \([n]\).  Unless stated otherwise, a digraph is simple and
loopless.
For a simple digraph \(\Gamma\), the notation \(\bar\Gamma\) means the
complement in the complete loopless digraph on the same vertex set.  In the
directed Ish section, \(\bar\Gamma(1)\) denotes the complement inside the
explicitly displayed indexing set, not inside the complete loopless digraph.
We use the falling factorial notation
\[
x^{\underline r}=x(x-1)\cdots(x-r+1),\qquad x^{\underline 0}=1.
\]

For the zero-layer formula, let \(\Stir_k(G)\) be the graphical Stirling number
defined in
\cite{duncan2009}, i.e., the number of partitions of the vertex set of \(G\)
into \(k\) nonempty independent sets, where an \emph{independent set} is a set
of vertices no two of which are adjacent.  Equivalently, \(k!\Stir_k(G)\) is the
number of proper \(k\)-colorings of \(G\) that use all \(k\) colors.

\begin{maintheorem}[Graph-deleted Catalan formula]\label{main-1}
	Let \(m\ge1\), and let \(G=([n],E(G))\) be a simple graph.  The arrangement
	\(\C_n^m(G)\) has characteristic polynomial
	\[
	\chi(\C_n^m(G),t)
	=
	t\sum_{k=1}^{n}
	\Stir_k(\bar G)(t-mk-1)^{\underline{k-1}}.
	\]
\end{maintheorem}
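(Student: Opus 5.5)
We use the finite field method. For a large prime $q$ (larger than $mn$ and than the denominators of the relevant Möbius data), $\chi(\C_n^m(G),q)$ is the number of points $x\in\mathbb F_q^n$ off every hyperplane of $\C_n^m(G)$. Concretely, these are the points with $x_i-x_j\notin\{\pm1,\dots,\pm m\}$ for all $i\ne j$, and $x_i\ne x_j$ whenever $\{i,j\}\notin E(G)$, i.e.\ whenever $\{i,j\}\in E(\bar G)$. The plan is to count these points in $\mathbb Z/q\mathbb Z$ via a block-and-gap decomposition of the image set, and then show the count equals the stated polynomial at $t=q$ for infinitely many $q$.

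\emph{Stratify by fibres.} For an admissible $x$, the fibres $x^{-1}(c)$ partition $[n]$. Two distinct occupied values $c\ne c'$ must satisfy $c-c'\notin\{\pm1,\dots,\pm m\}$, so they are at cyclic distance at least $m+1$. Vertices in the same fibre must be pairwise non-adjacent in $\bar G$. Hence the fibres form a partition of $[n]$ into $k$ nonempty independent sets of $\bar G$. Conversely, any such partition, together with an injective assignment of the $k$ blocks to values in $\mathbb Z/q$ whose pairwise cyclic distances are all $\ge m+1$, yields an admissible point. Here the equalities allowed inside a block are exactly the removed hyperplanes $x_i=x_j$ with $\{i,j\}\in E(G)$, and no other hyperplanes are hit. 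So
\[
\chi(\C_n^m(G),q)=\sum_{k=1}^n \Stir_k(\bar G)\cdot N_k(q),
\]
where $N_k(q)$ counts injective maps from $k$ labeled blocks into $\mathbb Z/q$ with all pairwise cyclic distances at least $m+1$.

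\emph{Count cyclic placements.} We claim $N_k(q)=q\,(q-mk-1)^{\underline{k-1}}$. Consider first $k=1$: then $N_1(q)=q$, which matches the formula since the falling factorial $(q-m-1)^{\underline 0}=1$. For $k\ge2$, first place block $1$ at any of $q$ positions. Reading clockwise from it, the remaining $k-1$ blocks occur in some order, and there are $(k-1)!$ such orders. Each of the $k$ cyclic gaps must contain at least $m$ empty points. The gaps are therefore determined by nonnegative slacks $g_1,\dots,g_k$ with $\sum g_i=q-k-mk$, giving $\binom{q-mk-1}{k-1}$ solutions. Multiplying,
\[
N_k(q)=q\,(k-1)!\binom{q-mk-1}{k-1}=q\,(q-mk-1)^{\underline{k-1}}.
\]
Summing over $k$ gives the claimed identity for all large primes $q$. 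Both sides are polynomials in $q$, so they agree identically, which proves the formula for $\chi(\C_n^m(G),t)$.

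The step I expect to need the most care is the justification that the finite-field count equals $\chi$ evaluated at $q$. This requires that the arrangement reduces well modulo $q$, which holds for $q$ large since all coefficients are in $\{0,\pm1\}$ and the constants are bounded by $m$. It also requires the hypothesis $q>m(k+1)$, which guarantees that the forbidden differences $\pm1,\dots,\pm m$ and $0$ stay distinct modulo $q$, so that the cyclic distance condition is exactly the hyperplane-avoidance condition. The stars-and-bars count is routine once the cyclic gap condition is correctly identified; the only subtle point there is the rotational anchoring. That anchoring is handled by fixing the position of the block containing vertex $1$ rather than dividing by $k$.
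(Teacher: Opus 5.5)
Your proposal is correct and is essentially the paper's proof. Both stratify the finite-field complement by the equality partition of the coordinates, whose parts must be independent sets of $\bar G$. Both then count the $k$ occupied boxes, each followed by $m$ mandatory empty boxes, as a cyclic block-and-gap placement, giving $q\,(q-mk-1)^{\underline{k-1}}$.

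One small slip: the bound you actually need is $q\ge (m+1)n$ (the paper takes $p>n(m+1)$), not $q>m(k+1)$. This guarantees that the slack $q-(m+1)k$ is nonnegative for every $k\le n$. Since you take $q$ large anyway, the slip does not affect the argument.
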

The directed layer deletions considered here are related to several earlier
directions, but the role of the digraph is different in each case.
Athanasiadis used the finite field method to obtain product formulas for
structured type-\(A\) deformations satisfying order or closure conditions
\cite[Theorems~3.4 and~3.9]{athanasiadis1996}.  Berthom{\'e} \textit{et al.}
developed gain-graph chromatic reductions that apply, among other examples, to
the Shi, Linial, and Catalan gain graphs~\cite{berthome2009}.  Armstrong and
Rhoades studied deleted Shi and Ish arrangements indexed by arbitrary
subgraphs of the complete graph and showed that the Shi--Ish symmetry preserves
several invariants, including the characteristic polynomial
\cite{armstrong2012}.  Abe, Tran, and Tsujie later introduced
vertex-weighted digraph arrangements that generalize Catalan, Shi, Ish, and
intermediate Shi--Ish arrangements, with a focus on freeness and operations
preserving characteristic polynomials~\cite{abeTranTsujie2024}.
In contrast, Theorem~\ref{main-2} deletes a single translated layer from the
full \(m\)-Catalan arrangement, with arbitrary simple digraph data.  Its
coefficients are directed matching numbers or directed path-cover numbers,
depending on the deleted layer.

We use standard digraph terminology~\cite{Bang09}.  A \emph{matching} in
\(\Gamma\) is a set of pairwise vertex-disjoint arcs; let
\(\operatorname{mat}_k(\Gamma)\) be the number of \(k\)-arc matchings.  A
\emph{path cover} of \(\Gamma\) is a set of vertex-disjoint directed paths
covering all vertices, with isolated vertices allowed as paths of length \(0\);
let \(\pathnum_k(\Gamma)\) be the number of path covers with exactly \(k\)
directed paths. For readability in the theorem, set
\[
\theta_n^{m,\ell}(k)=(m-\ell)k+\ell n+1.
\]

\begin{maintheorem}[Digraph-deleted Catalan formula]\label{main-2}
	Let \(m\ge1\), let \(1\le \ell\le m\), and let
	\(\Gamma=([n],A(\Gamma))\) be a simple digraph.  Then
	\begin{itemize}
		\item[(1)] When \(1\le \ell\le \lfloor m/2\rfloor\), the arrangement
		\(\C_n^{m,\ell}(\Gamma)\) has characteristic polynomial
		\[
		\chi(\C_n^{m,\ell}(\Gamma),t)
		=
		t\sum_{k=\lceil n/2\rceil}^{n}
		\operatorname{mat}_{n-k}(\Gamma)
		(t-\theta_n^{m,\ell}(k))^{\underline{k-1}}.
		\]
		\item[(2)] When \(\lfloor m/2\rfloor+1\le \ell\le m\), the arrangement
		\(\C_n^{m,\ell}(\Gamma)\) has characteristic polynomial
		\[
		\chi(\C_n^{m,\ell}(\Gamma),t)
		=
		t\sum_{k=1}^{n}
		\pathnum_k(\Gamma)
		(t-\theta_n^{m,\ell}(k))^{\underline{k-1}}.
		\]
	\end{itemize}
\end{maintheorem}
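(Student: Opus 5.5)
We will prove Theorem~\ref{main-2} with the finite field method. For a large prime \(q\), \(\chi(\C_n^{m,\ell}(\Gamma),q)\) counts the points \(x\in\mathbb F_q^n\) (equivalently, maps \([n]\to\mathbb Z_q\)) that lie on none of the hyperplanes. Write \(x_j\to x_i\) for the relation \(x_i-x_j=\ell\). A point is counted precisely when:
\begin{itemize}
	\item[(a)] all coordinates are distinct;
	\item[(b)] for every ordered pair \(i\ne j\), the difference \(x_i-x_j\notin\{1,\dots,m\}\), except that \(x_i-x_j=\ell\) is permitted when \((i,j)\in A(\Gamma)\).
\end{itemize}
Place the values on the cycle \(\mathbb Z_q\). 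Apart from the permitted layer-\(\ell\) coincidences, any two coordinates must then be at cyclic distance at least \(m+1\) in both directions.

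\textbf{Step 1 (stratification into rigid blocks).} Join \(i\) and \(j\) whenever \(x_i-x_j=\ell\) with \((i,j)\in A(\Gamma)\). Each connected component of this relation is a rigid block.
\begin{itemize}
	\item Every vertex has at most one outgoing and one incoming such arc, since coordinates are distinct. So each block is a directed path \(v_1\to v_2\to\cdots\to v_r\) in \(\Gamma\) with consecutive values spaced \(\ell\) apart. Cycles are impossible for \(q\) large, because \(r\ell<q\).
	\item Inside a block, non-consecutive vertices differ by \(s\ell\) with \(s\ge2\). This difference must avoid \([1,m]\), so we need \(2\ell>m\) unless \(r\le2\).
	\item Hence if \(\ell\le\lfloor m/2\rfloor\), blocks have size at most \(2\) and the arc set is a matching of \(\Gamma\). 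If \(\ell>\lfloor m/2\rfloor\), any directed path is allowed, since \(s\ell\ge2\ell>m\) for \(s\ge2\) and \(s\ell<q\).
\end{itemize}
So the counted points are stratified by path covers in case (2), and by matchings in case (1). In case (1), a matching with \(n-k\) arcs gives \(k\) blocks.

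\textbf{Step 2 (counting cyclic placements).} Fix a decomposition into \(k\) blocks. We count the placements of these blocks on \(\mathbb Z_q\) in which:
\begin{itemize}
	\item no additional permitted coincidence occurs, so the stratification is exact;
	\item distinct blocks are separated by forbidden-free gaps.
\end{itemize}
A block of \(r\) vertices occupies an interval of length \((r-1)\ell\). Between the last element of one block and the first element of the next block, in cyclic order, we need a gap of at least \(m+1\). This condition also guarantees that no difference \(x_i-x_j\) between the two blocks lands in \([1,m]\) in either direction. Exactness is automatic: a gap of exactly \(\ell\) would be \(\le m\) and is already excluded.

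The total rigid length is \(\sum(r-1)\ell=(n-k)\ell\). Choose the cyclic order of the blocks and the position of the first block; this gives \(q\cdot(k-1)!\) choices. The slack gaps \(g_1,\dots,g_k\ge m\) satisfy \(\sum(g_i+1)=q-(n-k)\ell\). The number of solutions is
\[
\binom{q-(n-k)\ell-mk-1}{k-1}.
\]
Combined with the \((k-1)!\) cyclic orders, this gives
\[
q\,(q-(n-k)\ell-mk-1)^{\underline{k-1}}.
\]
The exponent base simplifies as \((n-k)\ell+mk+1=(m-\ell)k+\ell n+1=\theta_n^{m,\ell}(k)\).

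Summing over the strata gives the two formulas:
\begin{itemize}
	\item in case (2), the sum is over path covers weighted by \(\pathnum_k(\Gamma)\);
	\item in case (1), the sum is over matchings weighted by \(\operatorname{mat}_{n-k}(\Gamma)\), with \(k\ge\lceil n/2\rceil\).
\end{itemize}
Both sides are polynomials agreeing at infinitely many primes, so they are equal.

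\textbf{Main obstacle.} The delicate step is Step 2's claim that the gap condition \(\ge m+1\) between consecutive blocks exactly captures all cross-block constraints.
\begin{itemize}
	\item Differences between non-adjacent blocks, and between interior points of adjacent blocks, are larger than the adjacent endpoint gap, so they exceed \(m\) for \(q\) large.
	\item The reverse-direction difference \(x_j-x_i\equiv q-(\cdot)\) must also be checked: it is near \(q\), so it avoids \([1,m]\).
	\item Points inside one block whose difference is negative of a multiple of \(\ell\) are likewise near \(q\).
	\item The \(q\)-factor has to be handled carefully: choosing the absolute position of a distinguished block and then a cyclic order accounts for rotations exactly once, since the blocks are labeled.
\end{itemize}
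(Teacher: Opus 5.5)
Your proposal is correct and follows the paper's proof essentially step for step. Both use the finite field method and stratify the points by the exact set of deleted \(\ell\)-layer equations that hold; the paper phrases this through restriction strata of the deleted arrangement, while you index directly by the satisfied set, which saves the converse check \(A_X=S\). Both then show these sets are matchings or path covers according to whether \(2\ell\le m\), and a cyclic block-and-gap count gives \(q\,(q-\theta_n^{m,\ell}(k))^{\underline{k-1}}\) per stratum. One point you leave implicit, and which the paper states explicitly, is that no vertex of another block can occupy the empty boxes between consecutive vertices of a path block. This follows immediately from your pairwise cyclic-distance condition, since such a vertex would lie within distance less than \(\ell\le m\) of a vertex of that block.
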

When \(\ell=m\), the formula above also gives a simple criterion:
integer linear factorization is controlled by the corresponding path-cover
polynomial and, for fixed \(\Gamma\), is independent of \(m\).  Changing \(m\)
only translates the nonzero roots.  This criterion is stated and proved in
Section~\ref{sec-digraph-deleted}.

The digraph formula above extends to deletions in several translated layers;
this is the third main result of the paper.  When \(S=\{\ell\}\), it recovers
Theorem~\ref{main-2}.  For several deleted layers, however, the coefficients
are no longer ordinary matching or path-cover numbers.  They are instead
weighted counts of admissible sets of
deleted gain-labeled arcs, which record compatible collections of deleted
translated hyperplanes. 
The precise definition of admissible sets of deleted gain-labeled arcs, together
with the quantities \(c(F)\) and \(W(F)\), is given in
Section~\ref{sec-simultaneous-translated-deletions}.

\begin{maintheorem}[Simultaneous deletions in translated layers]
\label{thm:simultaneous-translated-deletions}
	Let \(m\ge1\), let \(S\subseteq[m]\), and for each \(s\in S\) let
	\(\Gamma_s=([n],A_s)\) be a simple digraph.  Put
	\(\Gamma_\bullet=(\Gamma_s)_{s\in S}\).  Then
	\[
	\chi(\C_n^{m,S}(\Gamma_\bullet),t)
	=
	t\sum_F
	\bigl(t-W(F)-m\,c(F)-1\bigr)^{\underline{c(F)-1}},
	\]
	where the sum is over all admissible sets \(F\) of deleted gain-labeled arcs.
\end{maintheorem}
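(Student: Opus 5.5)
We will use the finite field method.  For a large prime \(q\), \(\chi(\C_n^{m,S}(\Gamma_\bullet),q)\) is the number of \(x\in\mathbb F_q^n\) such that \(x_i\ne x_j\) for \(i\ne j\), and \(x_i-x_j\notin\{1,\dots,m\}\) except when \(x_i-x_j=s\) with \(s\in S\) and \((i,j)\in A_s\).  We view such an \(x\) as an injective placement of the labels \(1,\dots,n\) on the cycle \(\mathbb Z/q\mathbb Z\).

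The plan is to stratify these placements by the \emph{close pairs}.  A close pair is an ordered pair \((i,j)\) with \(x_i-x_j=s\in[m]\), recorded as the gain-labeled arc \((i,j,s)\).  Every close pair of a valid placement is a deleted gain-labeled arc, so it lies in \(F_x=\{(i,j,s): s\in S,\ (i,j)\in A_s,\ x_i-x_j=s\}\).

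First, we group the points into \emph{blocks}: the connected components of the graph on \([n]\) whose edges are the close pairs of \(x\).  Inside a block the relative positions are forced by the gains.  Consecutive blocks on the cycle must be at distance greater than \(m\), since otherwise some pair between them would be close.  We take ``admissible'' to mean exactly the following conditions on \(F\):
\begin{itemize}
\item[(a)] the arc set is consistent, meaning gains add up along cycles;
\item[(b)] the positions it determines are distinct;
\item[(c)] any two vertices in the same component at distance at most \(m\) are joined by an arc of \(F\).
\end{itemize}
These conditions make \(F\) the full set of close pairs of its blocks, rigidly.  We then set \(c(F)\) to be the number of components, and \(W(F)\) to be the sum over components of (span \(= \max x - \min x\)).

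Second, for a fixed admissible \(F\) we count the placements with \(F_x=F\).  A placement is determined by three choices:
\begin{itemize}
\item the cyclic order of the \(c=c(F)\) rigid blocks;
\item the gaps between consecutive blocks, each at least \(m+1\);
\item the rotation.
\end{itemize}
Each block occupies \(\text{span}+1\) consecutive positions.  The standard count is as follows.  Fix block \(1\)'s minimum at each of the \(q\) positions, which gives the factor \(t\).  Then arrange the other \(c-1\) blocks in linear order, with gaps at least \(m\) free cells between blocks, including the wrap-around.  After compressing each block to one cell and removing the \(m\) mandatory empty cells after each block, \(q-W(F)-mc-1\) free cells remain, and we place \(c-1\) distinct labeled blocks in them with order mattering.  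This gives \((q-W(F)-mc-1)^{\underline{c-1}}\), which is the summand.  The exact bookkeeping is the \(\ell\)-independent version of the count that produced Theorems~\ref{main-1} and~\ref{main-2}; indeed for \(S=\{\ell\}\) it specializes to the matching and path-cover cases.

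Third, we sum over \(F\).  Distinct admissible \(F\) give disjoint strata, and every valid \(x\) lies in the stratum of \(F_x\).  Summing therefore gives the formula at every large prime \(q\), hence as polynomials.

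The main obstacle is the precise definition of admissibility together with the proof that the stratification is a bijection.  One point is that blocks separated by gap exactly \(m+1\) or more never create new close pairs.  The other is that a component's shape forces every short internal difference to be an arc of \(F\), and this needs condition (c).  Here the choice of deleted layers \(S\) interacts: a difference \(s\notin S\) inside a block is forbidden, which is what rules out components of some shapes.  We would handle this by making admissibility exactly the statement ``the rigid placement of each component realizes all of its pairwise differences in \([m]\) as arcs of \(F\), and none outside \(F\)'', and then checking it against the \(S=\{\ell\}\) cases.
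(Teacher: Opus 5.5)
Your proposal is correct and follows essentially the same route as the paper. Grouping points by the set \(F_x\) of satisfied deleted equations is the paper's restriction-stratum decomposition over \(L(\mathcal A_D)\) (via Lemma~\ref{lem:restriction-strata}); your conditions (a)--(c) are its admissibility conditions (1)--(3); and your compressed-cell count is Lemma~\ref{lem:cyclic-gap-count} with \(N=W(F)+(m+1)c(F)\). You leave implicit two details the paper writes out: lifting cycle gain sums and within-component differences from \(\mathbb F_q\) to \(\mathbb Z\) using \(q>n(m+1)\), and the fact that consecutive positions in a component differ by at most \(m\), so no vertex of another component can sit inside a block's interval.
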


Table~\ref{tab:main-results} summarizes the main characteristic-polynomial
formulas, the arrangements to which they apply, and the corresponding
coefficient enumerators.

\begin{table}[H]
	\centering
	\small
	\renewcommand{\arraystretch}{1.12}
	\setlength{\tabcolsep}{2.5pt}
	\begin{tabular*}{\textwidth}{@{}l@{\extracolsep{\fill}}ll@{}}
		\toprule
		Result & Arrangement & Coefficients or consequence\\
		\midrule
		Theorem~\ref{main-1} &
		\(\C_n^m(G)\) &
		graphical Stirling numbers of \(\bar G\)\\
		Theorem~\ref{main-2}(1) &
		\(\C_n^{m,\ell}(\Gamma)\) &
		directed matching numbers, \(1\le \ell\le\lfloor m/2\rfloor\)\\
		Theorem~\ref{main-2}(2) &
		\(\C_n^{m,\ell}(\Gamma)\) &
		directed path-cover numbers, \(\lfloor m/2\rfloor<\ell\le m\)\\
		Theorem~\ref{thm:simultaneous-translated-deletions} &
		\(\C_n^{m,S}(\Gamma_\bullet)\) &
		admissible sets of deleted gain-labeled arcs\\
		Theorem~\ref{thm:digraph-catalan-l-equals-m} &
	\(\C_n^{m,m}(\Gamma)\) &
		inclusion--exclusion over path covers of \(\bar\Gamma\)\\
		Corollary~\ref{cor:factorization-criterion} &
		\(\C_n^{m,m}(\Gamma)\) &
		factorization via path-cover polynomial\\
		\bottomrule
	\end{tabular*}
	\caption{A summary of the main characteristic-polynomial formulas.}
	\label{tab:main-results}
\end{table}

All three deletion formulas are proved by the same finite-field strategy:
one translates the deleted hyperplanes into forbidden differences over
\(\mathbb F_p\), decomposes the resulting configurations into cyclic block
placements, and reads off the corresponding falling-factorial coefficients.

Section~\ref{sec-deleted-catalan-arrangements} contains the finite-field setup
and proves the deleted-Catalan formulas.  Section~\ref{sec-path-cover-consequences}
derives path-cover consequences, including the Lah inversion and factorization
formulas.  Section~\ref{sec-ish-type-applications} treats Ish-type
applications.

\section{Deleted-Catalan arrangements}\label{sec-deleted-catalan-arrangements}

This section contains the finite-field proof of the three deleted-Catalan
formulas.  We first recall the placement model used throughout the paper, and
then treat zero-layer, single-layer, and multilayer deletions in increasing
generality.

\subsection{Finite-field preliminaries}\label{sec-prelim}
Let \(\A\) be a hyperplane arrangement in \(\mathbb F^n\).  Its complement is
\[
	M(\mathcal{A}) = \mathbb F^n \setminus \bigcup_{H\in \mathcal{A}} H.
\]
For \(X\in L(\A)\), the \emph{restriction} of \(\A\) to \(X\) is
\[
	\A^X=\{X\cap H\mid H\in\A,\ X\cap H\ne X,\ X\cap H\ne\emptyset\},
\]
viewed as an arrangement in \(X\), and \(M(\A^X)\) is taken inside \(X\).  The
standard restriction-stratum decomposition is
\begin{equation}\label{decomposition}
\mathbb F^n=\bigsqcup_{X\in L(\mathcal A)} M(\mathcal A^X).
\end{equation}

An affine hyperplane \(H\subseteq\mathbb R^n\) is \emph{integral} if it has a
defining equation
\[
H:\quad a_1x_1+\cdots+a_nx_n=c,
\]
with \(a_1,\dots,a_n,c\in\mathbb Z\).  An arrangement is \emph{integral} if all
its hyperplanes are integral.  For an integral arrangement \(\A\), let
\(\A_p\) be the arrangement over \(\mathbb F_p\) obtained by reducing all
defining equations modulo \(p\), for all sufficiently large primes \(p\).  Set
\[
M_p(\A)=M(\A_p)
\]
for the finite-field complement.

\begin{prop}[Finite field method {\cite{crapo1970,athanasiadis1996,Bjorner-Ekedahl1997}}]
	\label{prop:finite-field-method}
	Let \(\mathcal A\) be an integral arrangement in \(\mathbb R^n\).  For all
	sufficiently large primes \(p\), we have
	\[
	\chi(\mathcal A,p)
	=
	\#M_p(\mathcal{A}).
	\]
\end{prop}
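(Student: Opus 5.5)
We prove Proposition~\ref{prop:finite-field-method} by combining the M\"obius-function definition of \(\chi\) with a point count over \(\mathbb F_p\), and transferring intersection data from \(\mathbb R\) to \(\mathbb F_p\) via reduction modulo large primes.

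\textbf{Step 1 (point count over \(\mathbb F_p\)).} Work with \(\A_p\) inside \(\mathbb F_p^n\). For each \(X\in L(\A_p)\), set \(f(X)=\#M(\A_p^X)\), the number of points of \(X\) lying on no hyperplane of \(\A_p\) that properly cuts \(X\). Let \(g(X)=\#X=p^{\dim X}\). The decomposition \eqref{decomposition}, applied to \(X\) in place of \(\mathbb F_p^n\), gives
\[
g(X)=\sum_{Y\ge X}f(Y),
\]
since every point of \(X\) lies in a unique smallest flat \(Y\subseteq X\) of \(L(\A_p)\), and it lies in the open stratum of that \(Y\). M\"obius inversion on the finite poset \(L(\A_p)\) then gives
\[
\#M_p(\A)=f(\hat 0)=\sum_{X}\mu(\hat0,X)\,p^{\dim X}=\chi(\A_p,p).
\]

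\textbf{Step 2 (reduction preserves the intersection poset).} This is the main obstacle: we must show \(L(\A_p)\cong L(\A)\) as ranked posets for all large \(p\). Write the defining equations with integer coefficients, and collect them as an integer matrix \(A\) with right-hand side vector \(c\). Each subfamily \(\mathcal S\subseteq\A\) determines two facts over a field:
\begin{itemize}
\item whether the intersection \(\bigcap\mathcal S\) is nonempty, which happens iff \(\operatorname{rank}A_{\mathcal S}=\operatorname{rank}[A_{\mathcal S}\mid c_{\mathcal S}]\);
\item its dimension, namely \(n-\operatorname{rank}A_{\mathcal S}\).
\end{itemize}
A rank over \(\mathbb Q\) equals the rank mod \(p\) whenever \(p\) does not divide some nonzero maximal-size minor witnessing that rank. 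There are finitely many subfamilies, so finitely many such minors. Choosing \(p\) larger than all of their absolute values makes every rank agree.

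\textbf{Step 3 (conclude).} With all ranks agreeing, the map \(\bigcap\mathcal S\mapsto\bigcap\mathcal S_p\) is well defined and bijective. The reason is that two subfamilies give the same flat iff adding either subfamily's equations to the other does not raise the rank, which is itself a rank condition. The map preserves containment, since \(X\subseteq Y\) iff the equations of \(Y\) do not raise rank when added to those of \(X\); it also preserves dimension. Hence the M\"obius functions and dimensions coincide, so \(\chi(\A,t)=\chi(\A_p,t)\), and Step 1 finishes the proof.
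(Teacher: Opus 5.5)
The paper does not prove this proposition; it quotes it from Crapo--Rota, Athanasiadis, and Bj\"orner--Ekedahl and uses it as a black box, so there is no internal argument to compare with. Your proof is correct and is the standard elementary one, essentially Athanasiadis's. It combines M\"obius inversion of the point count over the stratification \eqref{decomposition} of \(\mathbb F_p^n\) (which the paper records together with Lemma~\ref{lem:restriction-strata}) with the isomorphism \(L(\A)\cong L(\A_p)\) of ranked posets, valid once \(p\) exceeds finitely many nonzero integer minors.

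The one point to state explicitly is that the containment test in Step~3 must use augmented ranks. When \(\bigcap\mathcal S\ne\emptyset\), one has \(\bigcap\mathcal S\subseteq\bigcap\mathcal T\) if and only if \(\operatorname{rank}[A_{\mathcal S\cup\mathcal T}\mid c_{\mathcal S\cup\mathcal T}]=\operatorname{rank}[A_{\mathcal S}\mid c_{\mathcal S}]\); coefficient ranks alone cannot separate parallel flats. Since \(\mathcal S\cup\mathcal T\) is again a subfamily, this rank is among those controlled in Step~2. The same rank control, applied to one- and two-element subfamilies, also shows that the reductions modulo \(p\) are genuine, pairwise distinct hyperplanes.
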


\begin{lemma}\label{lem:restriction-strata}
	Let \(\mathcal A\) be an arrangement and let \(X\in L(\mathcal A)\).  For
	\(y\in X\), we have \(y\in M(\mathcal A^X)\) if and only if, for every
	\(H\in\mathcal A\),
	\[
	y\in H
	\quad\Longleftrightarrow\quad
	X\subseteq H.
	\]
	Thus the stratum \(M(\mathcal A^X)\) records exactly the hyperplanes of
	\(\mathcal A\) that contain \(X\).
\end{lemma}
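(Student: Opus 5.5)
The argument reduces to two set-theoretic facts: every point of \(X\) already lies on each hyperplane containing \(X\), and a point of \(X\) lies in \(M(\mathcal A^X)\) exactly when it avoids the nonempty proper sections \(X\cap H\) of \(X\). Throughout, fix \(y\in X\) and \(H\in\mathcal A\). Since \(X\) and \(H\) are affine subspaces, exactly one of three cases holds: \(X\subseteq H\), or \(X\cap H=\emptyset\), or \(X\cap H\) is a nonempty proper affine subspace of \(X\). In the last case \(X\cap H\in\mathcal A^X\). Conversely, every element of \(\mathcal A^X\) arises in this way from some \(H\in\mathcal A\).

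\emph{Forward direction.} Suppose \(y\in M(\mathcal A^X)\). If \(X\subseteq H\), then \(y\in H\) because \(y\in X\). If \(X\not\subseteq H\), we show \(y\notin H\) using the trichotomy. When \(X\cap H=\emptyset\), we have \(y\notin H\) because \(y\in X\). When \(X\cap H\) is a nonempty proper subspace of \(X\), it is a member of \(\mathcal A^X\), and \(y\) avoids it by the definition of \(M(\mathcal A^X)\) taken inside \(X\). Hence \(y\notin X\cap H\), and since \(y\in X\), this gives \(y\notin H\). In all cases \(y\in H\iff X\subseteq H\).

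\emph{Reverse direction.} Suppose \(y\in H\iff X\subseteq H\) holds for every \(H\in\mathcal A\). Let \(K\in\mathcal A^X\), and write \(K=X\cap H\) with \(H\in\mathcal A\), \(X\cap H\ne X\) and \(X\cap H\ne\emptyset\). Then \(X\not\subseteq H\), so by hypothesis \(y\notin H\), and therefore \(y\notin K\). Since \(y\in X\) avoids every member of \(\mathcal A^X\), we conclude \(y\in M(\mathcal A^X)\).

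The closing sentence of the lemma, that the stratum records exactly the hyperplanes containing \(X\), is just a restatement of the equivalence. There is no real obstacle. The only point needing care is that \(M(\mathcal A^X)\) is taken inside \(X\), so membership \(y\in X\) is part of the setup and is used in both directions. The argument never uses the underlying field, so it applies equally to \(\mathbb F_p\) and \(\mathbb R\). This is the form needed for the decomposition \eqref{decomposition}.
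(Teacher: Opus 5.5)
Your proof is correct and follows the same route as the paper: the trichotomy \(X\subseteq H\), \(X\cap H=\emptyset\), or \(X\cap H\in\mathcal A^X\), with membership in \(M(\mathcal A^X)\) excluding exactly the third case. You also write out the reverse direction, which the paper leaves implicit, but the argument is the same.
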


\begin{proof}
	If \(X\subseteq H\), then every point of \(X\) lies in \(H\).  If
	\(X\not\subseteq H\), then either \(X\cap H=\emptyset\), in which case no point
	of \(X\) lies in \(H\), or \(X\cap H\) is a hyperplane of the restriction
	\(\mathcal A^X\).  The condition \(y\in M(\mathcal A^X)\) excludes the
	latter possibility.
\end{proof}

We identify \(\mathbb F_p=\{0,1,\ldots,p-1\}\) and represent its elements by
\(p\) boxes arranged cyclically and labeled by \(0,1,\ldots,p-1\).  A point
\((x_1,\ldots,x_n)\in\mathbb F_p^n\) is represented by placing \(i\) in the
box labeled by \(x_i\), for each \(i\in[n]\).  By the finite field method,
\(\chi(\mathcal A,p)\) counts those
placements satisfying the avoidance restrictions imposed by the hyperplanes.
In particular, if \(x_i-x_j=c\) belongs to \(\mathcal A\) with \(c>0\), then
\(i\) may not be placed \(c\) boxes clockwise after \(j\), where each box is
counted whether it is occupied by some element of \([n]\) or empty.  This placement model, together
with the restriction decomposition \eqref{decomposition}, is the main tool in
the proofs below.

\begin{lemma}\label{lem:cyclic-gap-count}
	Let \(k\ge1\) distinguishable blocks have total length \(N\le p\), and place
	them on a cycle of \(p\) boxes labeled by \(0,1,\ldots,p-1\), with arbitrary
	cyclic order.  If the remaining
	\(p-N\) boxes are distributed freely among the \(k\) cyclic gaps, then the
	number of placements is
	\[
	p(k-1)!\binom{p-N+k-1}{k-1}
	=
	p(p-N+k-1)^{\underline{k-1}}.
	\]
\end{lemma}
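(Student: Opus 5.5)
The plan is a direct double count: choose where the configuration starts on the cycle, then choose the cyclic order of the blocks, then choose how the free boxes are distributed among the gaps. To make this unambiguous, fix one distinguished block, say block \(1\), and record the box label of its first cell. This anchor can be any of the \(p\) boxes. We then read the cycle clockwise starting from block \(1\). A placement is thereby encoded by three pieces of data:
\begin{itemize}
\item the position of block \(1\), with \(p\) choices;
\item the linear order in which the other \(k-1\) blocks appear clockwise after block \(1\), with \((k-1)!\) choices;
\item the lengths \(g_1,\dots,g_k\ge0\) of the \(k\) gaps between consecutive blocks, the last gap running from the final block back to block \(1\), subject to \(g_1+\cdots+g_k=p-N\).
\end{itemize}

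The key step is to check that this encoding is a bijection with the set of placements. Given the data, we lay down block \(1\), then gap \(g_1\), then the next block, and so on. Since the total length is \(N+(p-N)=p\), the configuration closes up exactly on the cycle, and no two blocks overlap. Conversely, a placement determines the anchor position and the clockwise order. It also determines the gaps as the counts of empty boxes between consecutive blocks. Distinguishability of the blocks guarantees that different data give different placements. Because the anchor is pinned to an actual box label, there is no overcounting from cyclic rotation.

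By stars and bars, the number of gap vectors is \(\binom{p-N+k-1}{k-1}\). Multiplying the three counts gives \(p(k-1)!\binom{p-N+k-1}{k-1}\). The identity \((k-1)!\binom{a+k-1}{k-1}=(a+k-1)^{\underline{k-1}}\) with \(a=p-N\) then gives the second form. The case \(k=1\) is consistent: there is one gap of length \(p-N\) and the count is \(p\), as the formula predicts.

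The only delicate point is the bijection, and specifically the boundary case \(N=p\). There all gaps are forced to be zero, and we must confirm that the anchoring still counts each rotation exactly once. It does, since the anchor is the labeled position of block \(1\) and block \(1\) occupies a unique set of boxes.
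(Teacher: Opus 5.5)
Your proof is correct and follows essentially the same route as the paper: anchor a distinguished block ($p$ choices), order the remaining blocks ($(k-1)!$ ways), and count the gap weak compositions by stars and bars. You also spell out the bijection and the boundary case $N=p$, which the paper leaves implicit.
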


\begin{proof}
	Choose the starting position of one distinguished block; this gives \(p\)
	choices.  The other blocks have \((k-1)!\) cyclic orders.  Finally, the
	remaining \(p-N\) boxes form a weak composition into \(k\) gaps.  The displayed
	identity follows from
	\((k-1)!\binom{a}{k-1}=a^{\underline{k-1}}\).
\end{proof}

Figure~\ref{fig:cyclic-gap} records the cyclic placement model used repeatedly in
the finite-field counts below.  The blocks \(R_i\) have total length
\(N\), and only the dashed cyclic gaps are freely distributed.

\begin{figure}[H]
	\centering
	\begin{tikzpicture}[node distance=0pt]
		\node[fig block, minimum width=1.15cm] (b1) {\(R_1\)};
		\node[fig gap, dashed, fill=white, minimum width=1.05cm, right=of b1] (g1) {\(g_1\)};
		\node[fig block, minimum width=1.15cm, right=of g1] (b2) {\(R_2\)};
		\node[fig gap, dashed, fill=white, minimum width=1.05cm, right=of b2] (g2) {\(g_2\)};
		\node[font=\large, right=1mm of g2] (dots) {\(\cdots\)};
		\node[fig block, minimum width=1.15cm, right=1mm of dots] (bk) {\(R_k\)};
		\node[fig gap, dashed, fill=white, minimum width=1.05cm, right=of bk] (gk) {\(g_k\)};
		\node[font=\scriptsize, figblue, align=center, above=7mm of b1]
		{\(p\) choices\\for the first box};
		\node[font=\scriptsize, figgray, align=center, above=7mm of g2]
		{arbitrary cyclic order\\\((k-1)!\) choices};
		\draw[fig arrow, rounded corners=8pt]
		(gk.east) -- ++(0.45,0) |- ($(b1.south)+(0,-0.65)$) -| (b1.south);
		\node[font=\small, align=center, below=13mm of g2]
		{\(g_i\ge0\) and \(g_1+\cdots+g_k=p-N\), giving
		\(\binom{p-N+k-1}{k-1}\) choices.};
	\end{tikzpicture}
	\caption{The cyclic placement model in Lemma~\ref{lem:cyclic-gap-count}.}
	\label{fig:cyclic-gap}
\end{figure}

\subsection{Zero-layer deletions}\label{sec-graph-deleted}

Here the deleted zero-layer hyperplanes are indexed by a simple graph, while
all translated layers \(x_i-x_j=s\), \(1\le s\le m\), remain present.

\begin{proof}[Proof of Theorem~\ref{main-1}]	
	Let \(p\) be a sufficiently large prime, chosen in particular so that
	\(p>n(m+1)\).
	
	For \(y=(y_1,\ldots,y_n)\in M_p(\C_n^m(G))\), let \(\pi(y)\) be the
	partition of \([n]\) into equality classes of the coordinates:
	\(i\) and \(j\) lie in the same part if and only if \(y_i=y_j\).
	If \(\{i,j\}\in E(\bar G)\), then the zero-layer hyperplane \(x_i=x_j\)
	remains in \(\C_n^m(G)\).  Hence no part of \(\pi(y)\) contains an edge of
	\(\bar G\), and every part is an independent set of \(\bar G\).
	
	Conversely, fix a partition \(\pi=\{B_1,\ldots,B_k\}\) of \([n]\) into
	independent sets of \(\bar G\).  Points with equality partition \(\pi\) are
	obtained by placing all elements in each part \(B_s\) in one box, with
	different parts in different boxes.  Since all translated Catalan
	hyperplanes \(x_i-x_j=s\), \(1\le i\ne j\le n\), \(1\le s\le m\), remain
	present, every cyclic gap between consecutive occupied boxes must contain at
	least \(m\) empty boxes.  Thus each occupied box, together with the \(m\)
	mandatory empty boxes immediately after it, forms a placement block of length
	\(m+1\).  The \(k\) placement blocks have total mandatory length
	\(N=(m+1)k\), and the remaining boxes are distributed freely among the
	\(k\) cyclic gaps.  Lemma~\ref{lem:cyclic-gap-count} gives
	\[
	\#\{y\in M_p(\C_n^m(G)):\pi(y)=\pi\}
	=
	p\,(p-mk-1)^{\underline{k-1}}.
	\]
	There are \(\Stir_k(\bar G)\) such partitions with \(k\) parts.  Therefore
	\[
	\# M_p(\C_n^m(G))
	=
	p\sum_{k=1}^{n}\Stir_k(\bar G)(p-mk-1)^{\underline{k-1}}.
	\]
By the finite field method, the left-hand side is
\(\chi(\C_n^m(G),p)\) for all sufficiently large primes \(p\).  Hence the
displayed identity holds for infinitely many values of \(p\), and therefore
as a polynomial identity after replacing \(p\) by \(t\).
\end{proof}

We record several specializations of Theorem~\ref{main-1}.  When
\(G=\emptyset_n\), the arrangement
\(\C_n^m(\emptyset_n)\) is the full \(m\)-Catalan arrangement.  In this case
\(\overline{\emptyset_n}=K_n\), so \(\Stir_k(K_n)=0\) unless \(k=n\) and
\(\Stir_n(K_n)=1\).  At the other extreme, when \(G=K_n\), the arrangement
\(\C_n^m(K_n)\) is the \(m\)-semiorder arrangement.  Since
\(\bar K_n=\emptyset_n\), \(\Stir_k(\emptyset_n)\) equals the Stirling number of the second
kind \(\left\{\begin{smallmatrix}n\\ k\end{smallmatrix}\right\}\).  Then we have
\begin{align*}
	\chi(\C_n^m(\emptyset_n),t) = t(t-mn-1)^{\underline{n-1}},
	\quad
	\chi(\C_n^m(K_n),t) = t\sum_{k=1}^{n}\left\{\begin{matrix}n\\ k\end{matrix}\right\}(t-mk-1)^{\underline{k-1}}.
\end{align*}
Our formula applies directly when graphical Stirling numbers are known.  Galvin
and Thanh~\cite{galvin2013} computed \(\Stir_k(G)\) for forests and cycles.
Substituting their results into Theorem~\ref{main-1}, we obtain: if
\(\bar G=F_{n,c}\) is a forest on \(n\) vertices with \(c\) connected
components,
\[
\chi(\C_n^m(G),t)
=
t\sum_{k=1}^{n}
\left(
\sum_{i=0}^{c-1}
\binom{c-1}{i}
\left\{\begin{matrix}n-1-i\\ k-1\end{matrix}\right\}
\right)
(t-mk-1)^{\underline{k-1}}.
\]
If \(\bar G=C_n\) is a cycle with \(n\geq 2\), then
\[
\chi(\C_n^m(G),t)
=
t\sum_{k=1}^{n}
\left(
\sum_{i=0}^{n-2}
(-1)^i
\left\{\begin{matrix}n-1-i\\ k-1\end{matrix}\right\}
\right)
(t-mk-1)^{\underline{k-1}}.
\]

\subsection{Single-layer deletions}\label{sec-digraph-deleted}

Here the zero-layer hyperplanes are complete, while deleted hyperplanes in one
translated layer \(x_i-x_j=\ell\) are indexed by a simple digraph.  The count
depends on whether the selected deleted arcs can form long directed paths.

\begin{proof}[Proof of Theorem~\ref{main-2}]
	Let \(p\) be a sufficiently large prime, chosen in particular so that
	\(p>n(m+1)\), and let
	\[
	\A(\Gamma,\ell)
	=
	\{H_{ij}^{\ell}:x_i-x_j=\ell,\ (i,j)\in A(\Gamma)\}
	\]
	be the arrangement in \(\mathbb F_p^n\) consisting of the deleted \(\ell\)-th layer
	hyperplanes.  From the decomposition \eqref{decomposition}, we have
	\begin{align}
		M_p(\C_n^{m,\ell}(\Gamma))
		&=
		\bigsqcup_{X\in L(\A(\Gamma,\ell))}
		\bigl(M_p((\A(\Gamma,\ell))^X)
		\cap M_p(\C_n^{m,\ell}(\Gamma))\bigr).
		\label{eq:deleted-layer-decomposition}
	\end{align}
	For \(X\in L(\A(\Gamma,\ell))\), define a directed subgraph
	\(F_X=([n],A_X)\) of \(\Gamma\) by
	\[
	(i,j)\in A_X
	\quad\Longleftrightarrow\quad
	X\subseteq H_{ij}^{\ell},
	\qquad
	H_{ij}^{\ell}:\ x_i-x_j=\ell .
	\]
	Thus \(A_X\) records precisely the deleted \(\ell\)-layer equations that hold
	identically on \(X\).
	By Lemma~\ref{lem:restriction-strata}, for any
	\(\bm y\in M_p((\A(\Gamma,\ell))^X)\cap
	M_p(\C_n^{m,\ell}(\Gamma))\), among the deleted \(\ell\)-layer equations we
	have the exact condition
	\begin{equation*}
		y_i-y_j=\ell
		\quad\Longleftrightarrow\quad
		(i,j)\in A_X
		\qquad ((i,j)\in A(\Gamma)).
	\end{equation*}

	We next translate this exact condition into graph-theoretic restrictions on
	\(A_X\).  Two arcs with a common head, or two arcs with a common tail, force
	two coordinates to be equal, contradicting the zero-layer hyperplanes.  Thus
	every vertex has indegree and outdegree at most one in \(A_X\).  Directed
	cycles are also impossible: summing around a cycle of length \(s\) gives
	\(s\ell=0\) in \(\mathbb F_p\), while \(0<s\ell\le nm<p\).  Hence every
	contributing \(A_X\) is a vertex-disjoint union of directed paths, with
	isolated vertices allowed.

	It remains to decide which path lengths are compatible with the remaining
	Catalan hyperplanes.  If \(1\le \ell\le \lfloor m/2\rfloor\), a directed path
	\(i\to j\to h\) would force \(y_i-y_h=2\ell\).  Since \(2\ell\le m\) and
	\(2\ell\ne \ell\), this lies on a remaining hyperplane of
	\(\C_n^{m,\ell}(\Gamma)\).
	Thus only directed matchings can contribute in this range.  If
	\(\lfloor m/2\rfloor+1\le \ell\le m\), then vertices separated by \(s\ge2\)
	selected arcs in one directed path have coordinate difference \(s\ell>m\), and
	\(s\ell\le(n-1)m<p-m\).  Therefore neither orientation gives a forbidden
	difference in \(\{1,\ldots,m\}\), so directed path covers are allowed.

	Conversely, let \(S\subseteq A(\Gamma)\)
	be a directed matching in the first range, or a directed path cover in the
	second range, and put \(X=\bigcap_{(i,j)\in S}H_{ij}^{\ell}\).  Then
	\(A_X=S\): in
	the matching case this follows from vertex-disjointness, while in the
	path-cover case nonconsecutive vertices in one path have difference \(s\ell\)
	or \(p-s\ell\) with \(s\ge2\), and different path components have independent
	additive constants.  Combining the necessity above with this converse, the
	summands in \eqref{eq:deleted-layer-decomposition} to be counted are exactly
	those indexed by directed matchings of \(\Gamma\) in the first range, and by
	directed path covers of \(\Gamma\) in the second range.

	Having identified the contributing \(X\)'s, we now show that, in either
	range, the contribution of such an \(X\) to
	\eqref{eq:deleted-layer-decomposition} depends only on \(|A_X|\),
	equivalently on \(k=n-|A_X|\).  Fix such an \(X\) and suppose
	\(|A_X|=n-k\).  The \(k\) directed path components of \(([n],A_X)\)
	can be written as
	\[
	P_r:\ v_{r,0}\to v_{r,1}\to\cdots\to v_{r,a_r},
	\qquad 1\le r\le k,
	\]
	where \(a_r=0\) is allowed and \(\sum_r a_r=n-k\).  In the placement model,
	\(P_r\) contributes a rigid block: once the box occupied by \(v_{r,0}\) is
	chosen, \(v_{r,a}\) must occupy the box \(a\ell\) steps counterclockwise from
	it.  Thus the block has length \(1+\ell a_r\).  The intermediate boxes are
	internal to this block: placing a vertex from another component there would
	create a positive coordinate difference in \([1,\ell-1]\), hence a remaining
	Catalan hyperplane.  Distinct blocks must be
	separated by at least \(m\) empty boxes.  Indeed, if two consecutive blocks
	are separated by fewer than \(m\) empty boxes, then the two endpoint vertices
	facing this gap have an oriented difference \(d\in[1,m]\).  If \(d\ne \ell\),
	the corresponding hyperplane is still present in the arrangement; if
	\(d=\ell\), it is either still present or is a deleted \(\ell\)-layer
	hyperplane not
	selected in \(A_X\).  In both cases this contradicts the exact stratum
	condition above.  Hence the total mandatory length is
	\[
	N=\sum_{r=1}^{k}(1+\ell a_r)+mk
	=\ell(n-k)+(m+1)k.
	\]
	Because \(\ell\le m\), we have \(N\le mn+k\le n(m+1)<p\).
	Since \(N-k+1=\theta_n^{m,\ell}(k)\), Lemma~\ref{lem:cyclic-gap-count} gives the
	contribution of this fixed \(X\):
	\[
	\#\bigl(M_p((\A(\Gamma,\ell))^X)\cap M_p(\C_n^{m,\ell}(\Gamma))\bigr)
	=
	p\bigl(p-\theta_n^{m,\ell}(k)\bigr)^{\underline{k-1}}.
	\]
	Summing over the directed matchings in the first range and over the directed
	path covers in the second range gives
	\[
	\#M_p(\C_n^{m,\ell}(\Gamma))
	=
	\begin{cases}
		\displaystyle
		p\sum_{k=\lceil n/2\rceil}^{n}
		\operatorname{mat}_{n-k}(\Gamma)
		\bigl(p-\theta_n^{m,\ell}(k)\bigr)^{\underline{k-1}},
		& \text{if } 1\le \ell\le \lfloor m/2\rfloor,\\[1.2ex]
		\displaystyle
		p\sum_{k=1}^{n}
		\pathnum_k(\Gamma)
		\bigl(p-\theta_n^{m,\ell}(k)\bigr)^{\underline{k-1}},
		& \text{if } \lfloor m/2\rfloor+1\le \ell\le m.
	\end{cases}
	\]
	By the finite field method, these identities hold for
	\(\chi(\C_n^{m,\ell}(\Gamma),p)\) for all sufficiently large primes \(p\). Hence
	they hold as polynomial identities after replacing \(p\) by \(t\).
\end{proof}

For deletion of the \(m\)-layer, Theorem~\ref{main-2} turns integer linear
factorization into an elementary question about a path-cover polynomial.

For a simple digraph \(\Gamma\) on \([n]\), put
\[
P(\Gamma,z)=\sum_{k=1}^{n}\pathnum_k(\Gamma)z^{\underline{k-1}}.
\]

\begin{coro}\label{cor:factorization-criterion}
	Let \(m\ge1\), and let \(\Gamma\) be a simple digraph on \([n]\).
	Then there exist integers \(\alpha_1,\dots,\alpha_{n-1}\) such that
	\[
	P(\Gamma,z)=\prod_{j=1}^{n-1}(z+\alpha_j)
	\]
	if and only if the same integers satisfy
	\[
	\chi(\mathcal C_n^{m,m}(\Gamma),t)
	=
	t\prod_{j=1}^{n-1}(t-mn-1+\alpha_j).
	\]
	Consequently, for fixed \(\Gamma\), whether
	\(\chi(\mathcal C_n^{m,m}(\Gamma),t)\) factors into integer linear factors is
	independent of \(m\). Changing \(m\) only translates the nonzero roots.
\end{coro}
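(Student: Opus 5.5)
We specialize Theorem~\ref{main-2}(2) to \(\ell=m\). Since \(m\ge1\), we always have \(\lfloor m/2\rfloor+1\le m\), so part~(2) applies. With \(\ell=m\) the shift is
\[
\theta_n^{m,m}(k)=(m-m)k+mn+1=mn+1,
\]
which does not depend on \(k\). Therefore
\[
\chi(\C_n^{m,m}(\Gamma),t)=t\sum_{k=1}^{n}\pathnum_k(\Gamma)\,(t-mn-1)^{\underline{k-1}}=t\,P(\Gamma,t-mn-1).
\]
This identity between polynomials in \(t\) is the whole point; the rest is bookkeeping.

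For the equivalence we use the substitution \(z=t-mn-1\). It is an invertible affine change of variable with integer coefficients. Suppose \(P(\Gamma,z)=\prod_{j=1}^{n-1}(z+\alpha_j)\). Substituting gives
\[
\chi(\C_n^{m,m}(\Gamma),t)=t\prod_{j=1}^{n-1}(t-mn-1+\alpha_j).
\]
Conversely, suppose \(\chi(\C_n^{m,m}(\Gamma),t)=t\prod_{j}(t-mn-1+\alpha_j)\). Since \(\mathbb Q[t]\) is an integral domain, we may cancel the factor \(t\) and get \(P(\Gamma,t-mn-1)=\prod_j(t-mn-1+\alpha_j)\). Substituting \(t=z+mn+1\) then returns the factorization of \(P\).

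One small check is needed: the number of factors matches. The leading term of \(P(\Gamma,z)\) comes from \(k=n\). Here \(\pathnum_n(\Gamma)=1\), because the only path cover with \(n\) paths consists of all the isolated vertices. So \(P\) is monic of degree \(n-1\), consistent with a product of \(n-1\) monic linear factors.

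For the consequence, we have to be a little careful. Because of the explicit factor \(t\), factoring \(\chi\) into integer linear factors is the same as factoring \(\chi/t=P(\Gamma,t-mn-1)\) into integer linear factors. A monic polynomial splits into integer linear factors if and only if its translate by the integer \(mn+1\) does. Both conditions are therefore equivalent to \(P(\Gamma,z)\) splitting over \(\mathbb Z\), and \(P(\Gamma,z)\) does not involve \(m\). The nonzero roots, more precisely the roots of \(\chi/t\), are \(mn+1-\alpha_j\). Changing \(m\) to \(m'\) shifts each of them by \((m'-m)n\).

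The only step with any content is noticing that \(\theta_n^{m,m}(k)\) loses its \(k\)-dependence. We also need a minor remark on the root \(t=0\): the statement does not claim that the \(\alpha_j\) avoid \(mn+1\), so the factor \(t\) is simply kept separate, and there is no real obstacle.
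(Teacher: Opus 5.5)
Your proposal is correct and follows the paper's proof: specialize Theorem~\ref{main-2}(2) at \(\ell=m\), where \(\theta_n^{m,m}(k)=mn+1\), to get \(\chi(\C_n^{m,m}(\Gamma),t)=tP(\Gamma,t-mn-1)\). Then transfer factorizations and roots through the integer shift \(z=t-mn-1\). Your extra checks are correct details that the paper leaves implicit: that part~(2) applies for every \(m\ge1\), that \(P\) is monic of degree \(n-1\), and the cancellation of the factor \(t\).
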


\begin{proof}
	By Theorem~\ref{main-2}(2) with \(\ell=m\),
	\[
	\chi(\C_n^{m,m}(\Gamma),t)=tP(\Gamma,t-mn-1).
	\]
	Therefore, for integers \(\alpha_1,\dots,\alpha_{n-1}\),
	\[
	P(\Gamma,z)=\prod_{j=1}^{n-1}(z+\alpha_j)
	\]
	is equivalent, after the substitution \(z=t-mn-1\), to
	\[
	\chi(\C_n^{m,m}(\Gamma),t)
	=
	t\prod_{j=1}^{n-1}(t-mn-1+\alpha_j).
	\]
	This proves the stated criterion.  In particular, the roots of
	\(\chi(\C_n^{m,m}(\Gamma),t)/t\) are obtained from the roots of
	\(P(\Gamma,z)\) by adding \(mn+1\).  Hence, for fixed \(\Gamma\), the
	existence of a factorization into integer linear factors is independent of \(m\).
\end{proof}

\subsection{Multilayer deletions}
\label{sec-simultaneous-translated-deletions}

The preceding proof extends to simultaneous deletions in translated layers.  The
coefficients are no longer ordinary matching or path-cover numbers, but
weighted counts of admissible sets of deleted gain-labeled arcs.

Fix \(m\ge1\), let \(S\subseteq[m]\), and let
\(\Gamma_s=([n],A_s)\) be a simple digraph for each \(s\in S\).  Put
\(\Gamma_\bullet=(\Gamma_s)_{s\in S}\), and write
\[
\C_n^{m,S}(\Gamma_\bullet)
=
\C_n^m
\setminus
\{x_i-x_j=s:\ s\in S,\ (i,j)\in A_s\}.
\]
A \emph{deleted gain-labeled arc} is a triple \((i,j;s)\) with
\(s\in S\) and \((i,j)\in A_s\).  A set \(F\) of deleted gain-labeled arcs is
viewed as a spanning gain graph on \([n]\), with parallel triples of distinct
gains allowed.  Weak connectivity ignores orientations and gains; \(V(C)\)
denotes the vertex set of a weak component \(C\).

\begin{defi}\label{def:admissible-gain-labeled-arcs}
	A set \(F\) of deleted gain-labeled arcs is \emph{admissible} if the following
	conditions hold.
	\begin{itemize}
		\item[(1)] On every weak component \(C\), there is an integer potential
		\(\phi_C:V(C)\to\mathbb Z\) satisfying
		\[
		\phi_C(i)-\phi_C(j)=s
		\qquad\text{for every }(i,j;s)\in F.
		\]
		\item[(2)] The values of \(\phi_C\) are distinct on \(V(C)\).
		\item[(3)] If \(u,v\in V(C)\), \(u\ne v\), and
		\(\phi_C(u)-\phi_C(v)\in[1,m]\), then
		\[
		(u,v;\phi_C(u)-\phi_C(v))\in F.
		\]
	\end{itemize}
\end{defi}

Condition (3) is the closure condition for restriction strata: every forced
difference \(x_u-x_v=d\) with \(d\in[1,m]\) must correspond to a deleted
hyperplane recorded in the stratum; otherwise the stratum is empty.

For an admissible set \(F\), the potential on each weak component \(C\) is
unique up to adding a constant, so the quantity
\[
w(C)=\max_{v\in V(C)}\phi_C(v)-\min_{v\in V(C)}\phi_C(v)
\]
is well defined; define the \emph{span} of \(C\) to be \(w(C)\).  Let \(c(F)\)
be the number of weak components of \(F\), including isolated vertices, and set
\[
W(F)=\sum_C w(C),
\]
where the sum runs over all weak components.

	When \(S=\{\ell\}\), the admissibility conditions
	specialize to the restrictions already appearing in the single-layer
	argument: the admissible sets are directed matchings for
	\(1\le \ell\le\lfloor m/2\rfloor\), and directed path covers for
	\(\lfloor m/2\rfloor<\ell\le m\).  If such a set has \(k\) path components,
	including isolated vertices, then \(c(F)=k\) and \(W(F)=\ell(n-k)\).  Hence the
	shift in Theorem~\ref{thm:simultaneous-translated-deletions} becomes
	\[
	W(F)+m c(F)+1=\ell(n-k)+mk+1=\theta_n^{m,\ell}(k),
	\]
	which gives exactly the two formulas in Theorem~\ref{main-2}.

Before proving Theorem~\ref{thm:simultaneous-translated-deletions}, we
illustrate the closure condition in the admissibility conditions above.

\begin{example}\label{ex:multilayer-closure}
	Let \(m=3\), and consider the deleted gain-labeled arcs \((2,1;2)\) and
	\((3,2;1)\).  These two equations force potentials
	\[
	\phi(1)=0,\qquad \phi(2)=2,\qquad \phi(3)=3.
	\]
	The induced difference \(\phi(3)-\phi(1)=3\) lies in the interval
	\([1,3]\).  Therefore these two arcs alone do not form an admissible set:
	either the hyperplane \(x_3-x_1=3\) remains, making the stratum empty, or it
	is deleted but missing from the stratum that records exactly which deleted
	hyperplanes vanish.  If the deleted system also contains \((3,1;3)\), then
	the three arcs form an admissible component of span \(3\).
\end{example}

In the placement model, a component \(C\) is placed by putting its vertices at
their potential values.  For the component in
Example~\ref{ex:multilayer-closure}, the potentials are \(0,2,3\), so the
vertices occupy the positions \(0,2,3\).  The interval from the minimum to the
maximum potential forms one block of length \(w(C)+1\); empty positions inside
this interval are not part of the freely distributed gaps.  In the figure, the
green arcs are the two initially chosen deleted arcs, while the red arc is the
closure arc forced by the induced difference \(\phi(3)-\phi(1)=3\).
Figure~\ref{fig:gain-labeled-component-placement} illustrates this placement.

\begin{figure}[H]
	\centering
	\begin{tikzpicture}[x=1.32cm,y=1cm]
		\draw[figgray, thick] (-0.35,0) -- (6.9,0);
		\node[fig vertex] (u) at (0,0) {\(1\)};
		\node[fig vertex] (v) at (2,0) {\(2\)};
		\node[fig vertex] (w) at (3,0) {\(3\)};
		\node[font=\scriptsize] at (0,-0.78) {\(\phi=0\)};
		\node[font=\scriptsize] at (2,-0.78) {\(\phi=2\)};
		\node[font=\scriptsize] at (3,-0.78) {\(\phi=3\)};
		\draw[fig arrow, figgreen]
		(v) to[out=-145,in=-35,looseness=1.05]
		node[midway, below=2pt, font=\scriptsize, fill=white, inner sep=1pt] {\(2\)} (u);
		\draw[fig arrow, figgreen]
		(w) to[out=-145,in=-35,looseness=1.05]
		node[midway, below=2pt, font=\scriptsize, fill=white, inner sep=1pt] {\(1\)} (v);
		\draw[fig arrow, figred]
		(w) to[out=135,in=45,looseness=1.05]
		node[midway, above=2pt, font=\scriptsize, fill=white, inner sep=1pt] {\(3\)} (u);
		\draw[decorate, decoration={brace, mirror, amplitude=4pt}, figgold]
		(0,-1.2) -- node[below=4pt, font=\scriptsize] {span \(w(C)=3\)} (3,-1.2);
		\node[fig gap, dashed, fill=white, minimum width=1.45cm] (sep) at (4.65,0)
		{\(3\) empty};
		\node[fig block, minimum width=0.95cm] at (6.25,0) {next};
	\end{tikzpicture}
	\caption{The placement model for the admissible component in Example~\ref{ex:multilayer-closure}.}
	\label{fig:gain-labeled-component-placement}
\end{figure}

\begin{proof}[Proof of Theorem~\ref{thm:simultaneous-translated-deletions}]
	Let \(p\) be a sufficiently large prime with \(p>n(m+1)\), and let
	\(\mathcal A_D\) be the arrangement over \(\mathbb F_p\) formed by the deleted
	gain hyperplanes
	\[
	x_i-x_j=s
	\qquad (s\in S,\ (i,j)\in A_s).
	\]
	By the finite field method, it is enough to count
	\(M_p(\C_n^{m,S}(\Gamma_\bullet))\).  Decomposing \(\mathbb F_p^n\) into
	restriction strata for \(\mathcal A_D\), we get
	\begin{align}
	M_p(\C_n^{m,S}(\Gamma_\bullet))
	=
	\bigsqcup_{X\in L(\mathcal A_D)}
	\bigl(M_p((\mathcal A_D)^X)\cap
	M_p(\C_n^{m,S}(\Gamma_\bullet))\bigr).
	\label{eq:multilayer-deleted-strata}
	\end{align}
	For \(X\in L(\mathcal A_D)\), define
	\[
	F_X
	=
	\{(i,j;s):s\in S,\ (i,j)\in A_s,\
	X\subseteq \{x_i-x_j=s\}\}.
	\]
	If \(\bm y\in M_p((\mathcal A_D)^X)\cap
	M_p(\C_n^{m,S}(\Gamma_\bullet))\), then Lemma~\ref{lem:restriction-strata}
	gives the exact condition
	\[
	y_i-y_j=s
	\quad\Longleftrightarrow\quad
		(i,j;s)\in F_X
		\qquad (s\in S,\ (i,j)\in A_s).
	\]
	
	Suppose the stratum indexed by \(X\) is nonempty.  On each weak component of
	\(F_X\), define potentials by signed gain sums from a base vertex.  These
	potentials are well defined: the gain sum around any simple cycle is
	congruent to \(0\) modulo \(p\), has absolute value at most \(mn<p\), and
	therefore is \(0\) in \(\mathbb Z\).  Equal potentials would force equal
	coordinates, contradicting the zero-layer hyperplanes.  If vertices \(u,v\)
	in one component have \(\phi(u)-\phi(v)=d\in[1,m]\), then the stratum forces
	\(x_u-x_v=d\); this hyperplane must be deleted and, by the exact condition,
	must be recorded in \(F_X\).  Hence \(F_X\) is admissible.
	
	Conversely, let \(F\) be admissible.  Condition (1) gives a potential on each
	weak component.  Choosing one additive constant \(a_C\) for each component and
	setting \(x_v=a_C+\phi_C(v)\) shows that
	\[
	X=\bigcap_{(i,j;s)\in F}\{x_i-x_j=s\}
	\]
	is nonempty, hence \(X\in L(\mathcal A_D)\).  By construction,
	\(F\subseteq F_X\).  For the reverse inclusion, suppose that a deleted
	gain hyperplane \(x_u-x_v=s\) contains \(X\).  If \(u\) and \(v\) lie in
	different weak components, the component of \(u\) can be translated
	independently of the others, a contradiction.  Hence \(u\) and \(v\) lie in
	the same weak component \(C\), and \(X\) forces
	\(x_u-x_v=\phi_C(u)-\phi_C(v)\).  Since
	\(|\phi_C(u)-\phi_C(v)-s|\le mn<p\), the congruence
	\(\phi_C(u)-\phi_C(v)\equiv s\pmod p\) implies the integer equality
	\(\phi_C(u)-\phi_C(v)=s\).  Condition (3) gives \((u,v;s)\in F\).  Thus
	\(F_X=F\), and the summands in \eqref{eq:multilayer-deleted-strata} to be
	counted are exactly those indexed by admissible sets \(F\).

	Fix an admissible set \(F\).  We regard each weak component \(C\) as a rigid
	block.  Normalize its potential so that the minimum is \(0\).  Once the
	starting position of \(C\) is chosen, every vertex \(v\in V(C)\) must be placed
	at offset \(\phi_C(v)\); hence \(C\) occupies an interval of length
	\(w(C)+1\).  The empty boxes inside this interval are part of the block, not
	free gaps.  Indeed, consecutive occupied potential values in \(C\) differ by
	at most \(m\), since otherwise no gain edge could cross the gap,
	contradicting connectedness.  If a vertex from another component were inserted
	into such an internal gap, then it would lie within distance \(m\) of one of
	the two vertices of \(C\) adjacent to that gap.  Thus this outside vertex and
	that adjacent vertex of \(C\) would have an oriented difference in \([1,m]\).
	This gives either a remaining Catalan hyperplane or an unselected deleted gain
	hyperplane, both excluded by the exact condition.

	Attach \(m\) empty boxes after each component block.  This prevents
	Catalan-range differences between distinct components; inside one component,
	admissibility records exactly the forced differences in \([1,m]\).

	The total mandatory length, including these separating boxes, is
	\[
	N=\sum_C (w(C)+1+m)=W(F)+(m+1)c(F).
	\]
	A spanning tree of a component \(C\) shows that
	\(w(C)\le m(|V(C)|-1)\).  Hence \(N\le mn+c(F)\le n(m+1)<p\), so the cyclic
	gap lemma applies.
	Thus we have \(c(F)\) distinguishable blocks of total mandatory length \(N\),
	placed on the cycle of \(p\) boxes labeled by \(0,1,\ldots,p-1\), with
	arbitrary cyclic order.
	Lemma~\ref{lem:cyclic-gap-count} therefore gives
	\[
	p\bigl(p-N+c(F)-1\bigr)^{\underline{c(F)-1}}
	=
	p\bigl(p-W(F)-m\,c(F)-1\bigr)^{\underline{c(F)-1}}
	\]
	placements, exactly the points in the stratum indexed by \(F\).  Summing over
	all admissible sets \(F\), the finite field method gives
	\[
	\chi(\C_n^{m,S}(\Gamma_\bullet),p)
	=
	p\sum_F
	\bigl(p-W(F)-m\,c(F)-1\bigr)^{\underline{c(F)-1}}
	\]
	for all sufficiently large primes \(p\).  Hence the same identity holds as a
	polynomial identity after replacing \(p\) by \(t\).
\end{proof}

\begin{remark*}[Odd translated layers]
	The admissible-set formula has a simple form for odd translated layers on one
	side of the threshold \(\lfloor m/2\rfloor\).  Keep the notation
	\(\Gamma_\bullet=(\Gamma_s)_{s\in S}\); throughout this remark, \(c(F)\) and
	\(W(F)\) are as in Theorem~\ref{thm:simultaneous-translated-deletions}.
	First take
	\[
	S_-=\{s\in[m]:s\le\lfloor m/2\rfloor,\ s\equiv1\pmod 2\}.
	\]
	Thus \(\Gamma_\bullet=(\Gamma_s)_{s\in S_-}\).  Let
	\(\mathcal M(\Gamma_\bullet)\) be the set of pairwise vertex-disjoint
	gain-labeled arcs \((i,j;s)\), with \(s\in S_-\) and \((i,j)\in A_s\).
	Thus
	\[
	\chi(\C_n^{m,S_-}(\Gamma_\bullet),t)
	=
	t\sum_{F\in\mathcal M(\Gamma_\bullet)}
	\bigl(t-W(F)-m\,c(F)-1\bigr)^{\underline{c(F)-1}}.
	\]
	
	Next take
	\[
	S_+=\{s\in[m]:\lfloor m/2\rfloor<s\le m,\ s\equiv1\pmod 2\}.
	\]
	Thus \(\Gamma_\bullet=(\Gamma_s)_{s\in S_+}\).  Let
	\(\mathcal P(\Gamma_\bullet)\) be the set of gain-labeled arc sets
	\((i,j;s)\), with \(s\in S_+\) and \((i,j)\in A_s\), whose spanning gain
	graph is a disjoint union of directed paths, with isolated vertices allowed.
	Thus
	\[
	\chi(\C_n^{m,S_+}(\Gamma_\bullet),t)
	=
	t\sum_{F\in\mathcal P(\Gamma_\bullet)}
	\bigl(t-W(F)-m\,c(F)-1\bigr)^{\underline{c(F)-1}}.
	\]
	To see this, suppose first that \(F\) is admissible.  Since all gains under
	consideration are odd, two arcs with the same head or the same tail force
	either equal potentials, contradicting condition (2), or a nonzero even
	difference in \([1,m]\), contradicting the closure condition (3).  Directed
	cycles are also impossible, since the sum of the positive gains around such a
	cycle cannot be zero.  Hence every nontrivial component is a directed path.
	For \(S=S_-\), a directed path of length two forces an even endpoint
	difference in \([1,m]\), again contradicting condition (3); thus every
	nontrivial component is a single arc.  Conversely, every set in
	\(\mathcal M(\Gamma_\bullet)\) is admissible.  For \(S=S_+\), every
	nonconsecutive difference along a directed path is a sum of at least two gains
	from \(S_+\), hence is greater than \(m\).  Thus the admissible sets are
	exactly the path sets counted by \(\mathcal P(\Gamma_\bullet)\).
\end{remark*}

\section{Path-cover consequences}\label{sec-path-cover-consequences}

We now specialize the single-layer formula to the path-cover regime.  This gives
a Lah-inversion relation for complements and several computable consequences,
including extremal cases, bipartite orientations, and formulas obtained from
known factorizations.

\subsection{Lah inversion}\label{sec-path-cover-transform}

We compute the case \(\ell=m\) by inclusion--exclusion over the \(m\)-layer
hyperplanes that remain after the deletion, and then compare the result with
Theorem~\ref{main-2}.  These remaining hyperplanes are indexed by the complement
of \(\Gamma\) in the complete loopless digraph.

For a simple digraph \(\Gamma=([n],A(\Gamma))\), write
\(\bar\Gamma=([n],A(\bar\Gamma))\) for its complement in the complete loopless
digraph on \([n]\), so that
\[
A(\bar\Gamma)
=
\{(u,v)\in[n]\times[n]\mid u\ne v,\ (u,v)\notin A(\Gamma)\}.
\]

\begin{theorem}\label{thm:digraph-catalan-l-equals-m}
	Let \(m\ge 1\), and let \(\Gamma=([n],A(\Gamma))\) be a simple digraph.  Let
	\(\bar\Gamma=([n],A(\bar\Gamma))\) be the complement of \(\Gamma\) in the complete
	loopless digraph.  Then
	\[
	\chi(\C_n^{m,m}(\Gamma),t)
	=
	t\sum_{k=1}^{n}
	(-1)^{n-k}
	\pathnum_k(\bar\Gamma)
	(t-mn+k-1)^{\underline{k-1}}.
	\]
\end{theorem}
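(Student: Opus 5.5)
The plan is to compute \(\#M_p(\C_n^{m,m}(\Gamma))\) directly by inclusion--exclusion over the \(m\)-layer hyperplanes that remain, rather than over the deleted ones. The hyperplanes of \(\C_n^{m,m}(\Gamma)\) consist of the full \((m-1)\)-Catalan arrangement \(\mathcal C_n^{m-1}\) (braid plus layers \(1,\dots,m-1\); for \(m=1\) this is just \(\mathcal B_n\)) together with \(x_u-x_v=m\) for \((u,v)\in A(\bar\Gamma)\). For a large prime \(p\) with \(p>n(m+1)\), inclusion--exclusion gives
\[
\#M_p(\C_n^{m,m}(\Gamma))
=
\sum_{T\subseteq A(\bar\Gamma)}(-1)^{|T|}\,
\#\{y\in M_p(\mathcal C_n^{m-1}) : y_u-y_v=m\ \text{for all }(u,v)\in T\}.
\]
Unlike the stratum argument in the proof of Theorem~\ref{main-2}, here we count all points satisfying the equations of \(T\), not those satisfying exactly these equations. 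This is what makes the inter-block spacing \(m-1\) instead of \(m\).

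First I identify which \(T\) give a nonzero term, repeating the reasoning from the proof of Theorem~\ref{main-2}. Two arcs of \(T\) with a common head or tail force equal coordinates, which the braid hyperplanes forbid. A directed cycle forces \(sm\equiv0\pmod p\) with \(0<sm\le nm<p\), which is impossible. So \(T\) is a vertex-disjoint union of directed paths, and with isolated vertices included it is a path cover of \(\bar\Gamma\) with \(k=n-|T|\) paths, contributing the sign \((-1)^{n-k}\). Conversely, every such path cover is realizable. Vertices two or more steps apart on a path differ by \(sm\ge 2m>m-1\), and also \(p-sm>m-1\), so no hyperplane of \(\mathcal C_n^{m-1}\) is violated inside a component.

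Next I count the points for a fixed path cover \(T\) with \(k\) paths of lengths \(a_1,\dots,a_k\). Each path is a rigid block of length \(1+ma_r\). The \(m-1\) internal empty boxes between consecutive path vertices cannot hold a vertex from another component, since such a vertex would be at distance between \(1\) and \(m-1\) from a path vertex. Between distinct blocks, the only constraint is that the facing endpoints avoid differences in \([1,m-1]\). The difference \(m\) is now allowed, because we are not imposing exactness. So each block carries \(m-1\) mandatory empty boxes after it, and the total mandatory length is
\[
N=\sum_r(1+ma_r)+(m-1)k=m(n-k)+k+(m-1)k=mn.
\]
Lemma~\ref{lem:cyclic-gap-count} then gives \(p(p-mn+k-1)^{\underline{k-1}}\) points for this \(T\). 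Summing over all path covers yields
\[
\#M_p(\C_n^{m,m}(\Gamma))
=
p\sum_{k=1}^{n}(-1)^{n-k}\pathnum_k(\bar\Gamma)(p-mn+k-1)^{\underline{k-1}},
\]
and the finite field method (Proposition~\ref{prop:finite-field-method}) upgrades this to the polynomial identity.

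The step needing the most care is the spacing argument in the non-exact setting. I must check that interleaving blocks, or placing a vertex inside another block's internal gap, is always excluded by some layer-\((\le m-1)\) hyperplane. I must also check that an endpoint gap of exactly \(m-1\) empty boxes, which creates a difference of \(m\), is genuinely allowed and correctly counted in this inclusion--exclusion term. Both follow because every internal box lies within distance \(m-1\) of a path vertex, and only differences in \([1,m-1]\) are forbidden.
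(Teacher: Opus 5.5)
Your proposal is correct and follows the paper's proof essentially step for step. Both use inclusion--exclusion over subsets of the retained \(m\)-layer arcs \(A(\bar\Gamma)\) imposed on \(M_p(\mathcal C_n^{m-1})\), reduce to directed path covers of \(\bar\Gamma\) with sign \((-1)^{n-k}\), treat each path as a rigid block of length \(1+ma_r\) followed by \(m-1\) mandatory empty boxes so that \(N=mn\), and finish with Lemma~\ref{lem:cyclic-gap-count}; your realizability check for nonconsecutive path vertices only spells out a point the paper leaves implicit.
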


\begin{proof}
	Let \(p\) be a sufficiently large prime with \(p>n(m+1)\).  By the finite field method, it is
	enough to compute \(\#M_p(\C_n^{m,m}(\Gamma))\).  Since
	\(\C_n^{m,m}(\Gamma)\) is obtained from the full \(m\)-Catalan arrangement by
	deleting the \(m\)-layer hyperplanes indexed by \(A(\Gamma)\), we can write
	\[
	\C_n^{m,m}(\Gamma)
	=
	\mathcal C_n^{m-1}
	\cup
	\{x_i-x_j=m:(i,j)\in A(\bar\Gamma)\}.
	\]
	Thus it remains to impose, on \(M_p(\mathcal C_n^{m-1})\), the avoidance of
	the \(m\)-layer hyperplanes indexed by \(A(\bar\Gamma)\).  For
	\(S\subseteq A(\bar\Gamma)\), set
	\[
	f(S)
	=
	\{(x_1,\ldots,x_n)\in M_p(\mathcal C_n^{m-1}):x_i-x_j=m
	\text{ for every }(i,j)\in S\}.
	\]
	Here \(S\) records only the
	\(m\)-layer equations we impose, not necessarily all such equations satisfied
	by the point; the latter are accounted for by inclusion--exclusion.
	Inclusion--exclusion gives
	\[
	\#M_p(\C_n^{m,m}(\Gamma))
	=
	\sum_{S\subseteq A(\bar\Gamma)}
	(-1)^{|S|}\#f(S).
	\]
	We first determine when \(f(S)\) can be nonempty.  If \(S\) contains two arcs
	with the same tail, say \(i\to j\) and \(i\to h\), then the equations
	\(x_i-x_j=m\) and \(x_i-x_h=m\) force \(x_j=x_h\), impossible in
	\(M_p(\mathcal C_n^{m-1})\).  The same argument excludes two arcs with the
	same head.  Hence every vertex has indegree and outdegree at most one in
	\(([n],S)\), so \(([n],S)\) is a disjoint union of directed paths and directed
	cycles, with isolated vertices allowed.  Directed cycles are impossible:
	summing \(x_i-x_j=m\) around a cycle of length \(s\) gives \(sm=0\) in
	\(\mathbb F_p\), while \(0<sm\le mn<p\).  Thus only
	directed path covers of \(\bar\Gamma\) can contribute.
	
	Fix a directed path cover \(S\) with \(k\) directed paths, so \(|S|=n-k\).  In
	the placement model, write the paths as
	\[
	P_r:\ v_{r,0}\to v_{r,1}\to\cdots\to v_{r,a_r},
	\qquad 1\le r\le k,
	\]
	where \(a_r=0\) is allowed and \(\sum_r a_r=n-k\).  Each \(P_r\) forms a
	rigid block: after the box occupied by \(v_{r,0}\) is chosen, the vertex
	\(v_{r,a}\) is placed \(am\) steps counterclockwise from \(v_{r,0}\).  Thus
	the block has length \(1+a_r m\), with \(m-1\) empty boxes between consecutive
	vertices of the path.  No other vertex can occupy an intermediate box,
	because it would then differ from a neighboring path vertex by an element of
	\([1,m-1]\), contrary to membership in \(M_p(\mathcal C_n^{m-1})\).  Since
	\(p>n(m+1)\), these prescribed positions are
	distinct and do not wrap around the \(p\)-cycle.  Attach \(m-1\) empty boxes
	after each block to separate distinct blocks.  The total mandatory length is
	\[
	N=\sum_{r=1}^k(1+a_r m)+k(m-1)=mn.
	\]
	Since \(N=mn<p\), Lemma~\ref{lem:cyclic-gap-count} gives
	\[
	\#f(S)=p(p-mn+k-1)^{\underline{k-1}}
	\]
	for this fixed path cover.
	Summing over all path covers of \(\bar\Gamma\) with \(k\) directed paths and
	using \(|S|=n-k\), we obtain
	\[
	\#M_p(\C_n^{m,m}(\Gamma))
	=
	p\sum_{k=1}^{n}
	(-1)^{n-k}
	\pathnum_k(\bar\Gamma)
	(p-mn+k-1)^{\underline{k-1}}.
	\]
	By the finite field method, the same formula holds for
	\(\chi(\C_n^{m,m}(\Gamma),p)\).  Since this identity holds for all
	sufficiently large primes \(p\), it holds as a polynomial identity after
	replacing \(p\) by \(t\).
\end{proof}

For \(\ell=m\), the path-cover expansion in Theorem~\ref{main-2}
and the inclusion--exclusion expansion in
Theorem~\ref{thm:digraph-catalan-l-equals-m} are two expansions of the
same characteristic polynomial.  Hence, for every simple digraph \(\Gamma\),
\begin{equation}\label{eq:l-equals-m-path-cover-comparison}
\sum_{h=1}^{n}
\pathnum_h(\Gamma)(t-mn-1)^{\underline{h-1}}
=
\sum_{k=1}^{n}
(-1)^{n-k}
\pathnum_k(\bar\Gamma)(t-mn+k-1)^{\underline{k-1}}.
\end{equation}
Let
\[
L(k,h)=\binom{k-1}{h-1}\frac{k!}{h!}
\qquad (1\le h\le k)
\]
be the unsigned Lah numbers.

\begin{coro}\label{cor:path-cover-complement-transform}
	For every simple digraph \(\Gamma\) on \([n]\),
	\begin{equation}\label{eq:path-cover-complement-transform}
		\pathnum_h(\bar\Gamma)
		=
		\sum_{k=h}^{n}
		(-1)^{n-k}
		L(k,h)
		\pathnum_k(\Gamma)
		\qquad (1\le h\le n).
	\end{equation}
\end{coro}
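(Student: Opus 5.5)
The plan is to apply the comparison identity \eqref{eq:l-equals-m-path-cover-comparison} to \(\bar\Gamma\) rather than to \(\Gamma\). Then expand the right-hand side in the falling-factorial basis and compare coefficients. Since \(\bar{\bar\Gamma}=\Gamma\) (complements taken in the complete loopless digraph) and \eqref{eq:l-equals-m-path-cover-comparison} holds for every simple digraph, replacing \(\Gamma\) by \(\bar\Gamma\) gives
\[
\sum_{h=1}^{n}\pathnum_h(\bar\Gamma)(t-mn-1)^{\underline{h-1}}
=
\sum_{k=1}^{n}(-1)^{n-k}\pathnum_k(\Gamma)(t-mn+k-1)^{\underline{k-1}}
\]
as polynomials in \(t\). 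Fix any \(m\ge1\), say \(m=1\), and substitute \(z=t-mn-1\). This turns the identity into
\[
\sum_{h}\pathnum_h(\bar\Gamma)\,z^{\underline{h-1}}
=\sum_k(-1)^{n-k}\pathnum_k(\Gamma)\,(z+k)^{\underline{k-1}}.
\]

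The key step is the expansion
\[
(z+k)^{\underline{k-1}}=\sum_{h=1}^{k}L(k,h)\,z^{\underline{h-1}}.
\]
I would derive it from the classical Lah identity \(x^{\overline{k}}=\sum_{h=1}^{k}L(k,h)\,x^{\underline h}\), taken at \(x=z+1\). The left side is
\[
(z+1)^{\overline k}=(z+1)(z+2)\cdots(z+k)=(z+1)\,(z+k)^{\underline{k-1}}.
\]
On the right, each term satisfies \((z+1)^{\underline h}=(z+1)\,z^{\underline{h-1}}\). Dividing both sides by the nonzero polynomial \(z+1\) gives the claimed expansion.

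Substituting this expansion and exchanging the order of summation, the right-hand side becomes
\[
\sum_{h=1}^{n}\Bigl(\sum_{k=h}^{n}(-1)^{n-k}L(k,h)\pathnum_k(\Gamma)\Bigr)z^{\underline{h-1}}.
\]
The polynomials \(z^{\underline{h-1}}\), \(1\le h\le n\), have distinct degrees and are therefore linearly independent. Comparing coefficients of \(z^{\underline{h-1}}\) then yields \eqref{eq:path-cover-complement-transform}.

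The only genuinely nontrivial ingredient is the shifted Lah expansion above. If one prefers not to cite the classical identity \(x^{\overline k}=\sum_h L(k,h)x^{\underline h}\), I would verify it combinatorially: both sides count placements of \(k\) labeled balls into \(x\) distinguishable ordered lists, with the right side first choosing a partition of the balls into \(h\) nonempty linearly ordered blocks. The rest of the argument is bookkeeping.
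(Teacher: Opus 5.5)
Your proposal is correct and is essentially the paper's proof. Apply \eqref{eq:l-equals-m-path-cover-comparison} to \(\bar\Gamma\), expand each term with the shifted Lah identity, and compare coefficients in the falling-factorial basis. The paper writes that identity as \((y+k-1)^{\underline{k-1}}=\sum_{h}L(k,h)(y-1)^{\underline{h-1}}\) with \(y=t-mn=z+1\). Your one addition is a justification of this expansion, obtained by dividing the classical rising-to-falling Lah identity at \(x=z+1\) by \(z+1\); the paper simply asserts it.
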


\begin{proof}
	Set \(y=t-mn\).  By \eqref{eq:l-equals-m-path-cover-comparison}, with
	\(\Gamma\) replaced by \(\bar\Gamma\),
	\[
	\sum_{h=1}^{n}
	\pathnum_h(\bar\Gamma)(y-1)^{\underline{h-1}}
	=
	\sum_{k=1}^{n}
	(-1)^{n-k}
	\pathnum_k(\Gamma)(y+k-1)^{\underline{k-1}}.
	\]
	Using
	\[
	(y+k-1)^{\underline{k-1}}
	=
	\sum_{h=1}^{k}
	L(k,h)
	(y-1)^{\underline{h-1}},
	\]
	and comparing the coefficients of
	\((y-1)^{\underline{h-1}}\), we obtain the desired identity.
\end{proof}

Equivalently, one may count \(h\)-path covers in the complete loopless digraph
that avoid all arcs of \(\Gamma\); a fixed \(k\)-component path cover using
arcs of \(\Gamma\) can be completed to an \(h\)-component path cover in
\(L(k,h)\) ways.

\begin{remark*}
	For simple graphs, this operator form is the path-cover duality relation
	studied in~\cite{jumadildayev2025duality}.  The same transformation has the
	following digraph form.  Let \(D=d/dz\).  For a digraph \(\Gamma\) on \([n]\), define
	\[
	\Path(\Gamma,z)=\sum_{k=1}^{n}\pathnum_k(\Gamma)z^k.
	\]
	Then \eqref{eq:path-cover-complement-transform} is equivalent to
	\[
	\Path(\bar\Gamma,z)
	=
	(-1)^n e^{zD^2}\Path(\Gamma,-z),
	\]
	where \(e^{zD^2}\) is understood by its finite power series on polynomials.
	Indeed, the monomial identity
	\[
	e^{zD^2}z^k
	=
	\sum_{h=1}^{k}
	L(k,h)z^h
	\]
	shows that the coefficient comparison in
	\eqref{eq:path-cover-complement-transform} is exactly this operator identity.
\end{remark*}

\subsection{Extremal cases}\label{sec-coefficient-formulas}

We give computable consequences of Theorem~\ref{main-2}.  Under the present
deletion convention, \(\Gamma\) indexes the deleted \(\ell\)-th layer hyperplanes,
so the coefficients are matching and path-cover numbers of \(\Gamma\).  The
examples below cover the case \(\ell=m\), a complete bipartite
orientation with compact region counts, and factored arrangements from which
path-cover numbers can be extracted.

Two extremal digraphs provide checks on the formula and connect it to
standard deformations.
Let \(\mathcal T_n\) be the transitive tournament with arcs \((i,j)\) for
\(i<j\).  A path cover of \(\mathcal T_n\) is the same as a set partition of
\([n]\), since each block has a unique increasing directed path.  Hence
\(\pathnum_k(\mathcal T_n)=\left\{\begin{smallmatrix}n\\ k\end{smallmatrix}\right\}\).
At \(\ell=m\), Theorem~\ref{main-2} gives
\[
\chi(\C_n^{m,m}(\mathcal T_n),t)
=
t\sum_{k=1}^{n}
\left\{\begin{matrix}n\\ k\end{matrix}\right\}
(t-mn-1)^{\underline{k-1}}
=
t(t-mn)^{n-1}.
\]
Under the coordinate change \(x\mapsto-x\), this arrangement becomes the
standard extended \(m\)-Shi arrangement.

At the opposite extreme, let \(\mathcal D_n\) be the complete loopless digraph
on \([n]\).  Its path-cover numbers are the unsigned Lah numbers
\[
L(n,k)=\binom{n-1}{k-1}\frac{n!}{k!}
\qquad (1\le k\le n),
\]
which, under Gonzales's nonadjacency convention, form the empty-graph case of
linear graph partitions~\cite{gonzales2022}.  Since
deleting all \(m\)-th layer hyperplanes from the full \(m\)-Catalan arrangement
leaves the full \((m-1)\)-Catalan arrangement, Theorem~\ref{main-2} gives the
identity
\[
t\sum_{k=1}^{n}
L(n,k)(t-mn-1)^{\underline{k-1}}
=
t(t-(m-1)n-1)^{\underline{n-1}}.
\]

\subsection{Bipartite orientations}

Let \(\mathcal K_{a,b}\) have vertex classes \(A\) and \(B\),
with positive sizes \(|A|=a\), \(|B|=b\), and all arcs directed from \(A\) to
\(B\).  Put \(n=a+b\).  Since \(\mathcal K_{a,b}\) has no directed path of
length \(2\), a path cover is just a directed matching together with isolated vertices, and
the number of such arc sets with \(q\) arcs is
\(\binom aq\binom bq q!\).  Therefore, for every \(1\le \ell\le m\),
\[
\chi(\C_n^{m,\ell}(\mathcal K_{a,b}),t)
=
t\sum_{q=0}^{\min\{a,b\}}
\binom aq\binom bq q!\,
\bigl(t-\theta_n^{m,\ell}(n-q)\bigr)^{\underline{n-q-1}}.
\]
When \(\ell=m\), this example also gives compact region counts.  Let
\[
D=\mathbb R(1,\dots,1).
\]
Every hyperplane \(x_i-x_j=c\) is invariant under translation by \(D\), so we
count regions in the essentialization, that is, in \(\mathbb R^n/D\),
equivalently after fixing one coordinate.  Write
\[
\chi(\C_n^{m,m}(\mathcal K_{a,b}),t)=t\psi(t).
\]
Then \(\psi(t)\) is the characteristic polynomial of the essentialization.
By Zaslavsky's formulas~\cite{zaslavsky1975},
\[
r_{\mathrm{ess}}=(-1)^{n-1}\psi(-1),
\qquad
b_{\mathrm{ess}}=(-1)^{n-1}\psi(1).
\]
Thus
\[
r_{\mathrm{ess}}
=
\sum_{q=0}^{\min\{a,b\}}
(-1)^q\binom aq\binom bq q!\,
((m+1)n-q)^{\underline{n-q-1}},
\]
and
\[
b_{\mathrm{ess}}
=
\sum_{q=0}^{\min\{a,b\}}
(-1)^q\binom aq\binom bq q!\,
((m+1)n-q-2)^{\underline{n-q-1}}.
\]
Set \(X=(m+1)n\).  We use the identity
\begin{equation}\label{eq:bipartite-vandermonde}
	\sum_{q=0}^{\min\{a,b\}}
	(-1)^q\binom aq\binom bq q!\,
	(X-q)^{\underline{a+b-q-1}}
	=
	(X-b)^{\underline a}(X-a)^{\underline{b-1}}.
\end{equation}
This is a direct Chu--Vandermonde convolution, after factoring
\((X-a)^{\underline{b-1}}\).  Applying
\eqref{eq:bipartite-vandermonde} with \(X\) and with \(X-2\) gives
\[
r_{\mathrm{ess}}
=
((m+1)n-b)^{\underline a}
((m+1)n-a)^{\underline{b-1}},
\]
and
\[
b_{\mathrm{ess}}
=
((m+1)n-b-2)^{\underline a}
((m+1)n-a-2)^{\underline{b-1}}.
\]

\subsection{Factorization formulas}

Theorem~\ref{main-2} can also be used in the reverse direction.  Taking
\(m=\ell=1\), its second part gives
\[
\chi(\C_n^{1,1}(\Gamma),t)
=
t\sum_{k=1}^{n}
\pathnum_k(\Gamma)(t-n-1)^{\underline{k-1}}.
\]
Thus, whenever the same characteristic polynomial is known to factor into
integer linear factors, the path-cover numbers of \(\Gamma\) can be extracted
from its expansion in the falling-factorial basis.
Equivalently, for the path-cover polynomial \(P(\Gamma,z)\), a factorization
gives explicit coefficients: if
\[
P(\Gamma,z)=\prod_{j=1}^{n-1}(z+\alpha_j),
\]
then Newton's finite-difference extraction in the basis
\(\{z^{\underline{k-1}}\}\) gives
\[
\pathnum_k(\Gamma)
=
\frac{1}{(k-1)!}
\sum_{q=0}^{k-1}
(-1)^{k-1-q}\binom{k-1}{q}
\prod_{j=1}^{n-1}(q+\alpha_j)
\qquad (1\le k\le n).
\]
The next two propositions apply Athanasiadis' factorization theorems to the
arc set \(A\) of retained \(1\)-layer hyperplanes.

Let \(A(\mathcal T_n)=\{(i,j):1\le i<j\le n\}\) and
\(A(\mathcal D_n)=\{(i,j):1\le i\ne j\le n\}\).  We first use the one-sided
factorization of Athanasiadis~\cite[Theorem~3.4]{athanasiadis1996}.  Let
\(A\subseteq A(\mathcal T_n)\) be such that if \((i,j)\in A\) and \(j<r\), then
\((i,r)\in A\).  Equivalently, for each
fixed tail \(i\), the set \(\{j:(i,j)\in A\}\) is a terminal
interval in \(\{i+1,\dots,n\}\).  Let
\[
A(\Gamma)=A(\mathcal D_n)\setminus A.
\]
Then \(\Gamma=([n],A(\Gamma))\) indexes the deleted \(1\)-layer hyperplanes, and
the retained \(1\)-layer hyperplanes in \(\C_n^{1,1}(\Gamma)\) are exactly \(x_i-x_j=1\), if \((i,j)\in A\). For \(2\le j\le n\), set
\[
a_j=\#\{i<j:(i,j)\in A\}.
\] 
Athanasiadis' factorization gives
\[
\chi(\C_n^{1,1}(\Gamma),t)
=
t\prod_{j=2}^{n}(t-n+j-a_j-1).
\]
Comparing this product formula with the case \(m=\ell=1\) of
Theorem~\ref{main-2} yields the following path-cover formula.

\begin{prop}\label{prop:factorization-extraction-initial-intervals}
	Let \(A\subseteq A(\mathcal T_n)\) be such that if \((i,j)\in A\) and
	\(j<r\), then \((i,r)\in A\), and let
	\(\Gamma=([n],A(\mathcal D_n)\setminus A)\).  Then, for \(1\le k\le n\),
	\[
	\pathnum_k(\Gamma)
	=
	\frac{1}{(k-1)!}
	\sum_{q=0}^{k-1}
	(-1)^{k-1-q}
	\binom{k-1}{q}
	\prod_{j=2}^{n}(q+j-a_j).
	\]
	In particular, \(\pathnum_k(\Gamma)\) depends only on \(a_2,\dots,a_n\).
\end{prop}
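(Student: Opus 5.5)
The plan is to compare two expressions for \(\chi(\C_n^{1,1}(\Gamma),t)\) and then extract coefficients in the falling-factorial basis.

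\textbf{Step 1: the path-cover polynomial.} Theorem~\ref{main-2}(2) with \(m=\ell=1\) gives \(\theta_n^{1,1}(k)=n+1\) for every \(k\), hence
\[
\chi(\C_n^{1,1}(\Gamma),t)=t\,P(\Gamma,t-n-1).
\]
Here \(\C_n^{1,1}(\Gamma)\) consists of the braid hyperplanes together with \(x_i-x_j=1\) for \((i,j)\in A\). This is exactly the arrangement to which Athanasiadis' Theorem~3.4 applies under the stated terminal-interval hypothesis, so we may take the displayed factorization
\[
\chi(\C_n^{1,1}(\Gamma),t)=t\prod_{j=2}^{n}(t-n+j-a_j-1)
\]
as given. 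I would check once that the orientation convention (\(x_i-x_j=1\) with \(i<j\)) matches Athanasiadis' hypothesis, so that the \(a_j\) count in-arcs from smaller indices as defined. This convention check is the only real potential obstacle. If the conventions disagree, I would apply the coordinate change \(x_i\mapsto -x_{n+1-i}\), which preserves the braid arrangement and converts between the two conventions.

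\textbf{Step 2: a polynomial identity.} Divide both expressions by \(t\); this is legitimate in \(\mathbb Z[t]\). Then substitute \(z=t-n-1\). Since \(t-n+j-a_j-1=z+j-a_j\), we obtain
\[
P(\Gamma,z)=\sum_{k=1}^{n}\pathnum_k(\Gamma)z^{\underline{k-1}}=\prod_{j=2}^{n}(z+j-a_j).
\]
Equivalently, this is Corollary~\ref{cor:factorization-criterion} with \(\alpha_j=j-a_j\), after reindexing the product by \(j=2,\dots,n\).

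\textbf{Step 3: Newton extraction.} For a polynomial \(Q(z)=\sum_{r\ge0}c_r z^{\underline r}\), the forward difference satisfies \(\Delta z^{\underline r}=r\,z^{\underline{r-1}}\). Hence \(c_r=\Delta^rQ(0)/r!\), and
\[
\Delta^rQ(0)=\sum_{q=0}^{r}(-1)^{r-q}\binom rq Q(q).
\]
Apply this with \(r=k-1\) and \(Q(q)=\prod_{j=2}^n(q+j-a_j)\). This yields the stated formula for \(\pathnum_k(\Gamma)\) for \(1\le k\le n\).

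The final claim, that \(\pathnum_k(\Gamma)\) depends only on \(a_2,\dots,a_n\), is then immediate, since the right-hand side involves only these quantities.
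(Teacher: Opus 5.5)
Your proposal is correct and is essentially the paper's proof. Both compare Theorem~\ref{main-2}(2) at \(m=\ell=1\), where \(\theta_n^{1,1}(k)=n+1\), with Athanasiadis' product formula, cancel \(t\), substitute \(z=t-n-1\), and extract the coefficient of \(z^{\underline{k-1}}\) by Newton's forward-difference formula. Your convention fallback is unnecessary, since the paper takes the factorization already in this orientation. As written it would also not do what you intend: \(x_i\mapsto -x_{n+1-i}\) sends \(x_i-x_j=1\) with \(i<j\) to \(x_{n+1-j}-x_{n+1-i}=1\), which keeps the smaller-minus-larger form and only swaps the roles of heads and tails.
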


\begin{proof}
	After setting \(z=t-n-1\), the comparison becomes
\[
\sum_{k=1}^{n}\pathnum_k(\Gamma)z^{\underline{k-1}}
=
\prod_{j=2}^{n}(z+j-a_j),
\]
	and the displayed formula follows by extracting the coefficient of
	\(z^{\underline{k-1}}\) through Newton's finite-difference formula.
\end{proof}

We also use Athanasiadis' directed factorization theorem
\cite[Theorem~3.9]{athanasiadis1996}.  In our orientation convention, its
hypotheses require that, for distinct \(i,j,k\) with \(i,j<k\),
\begin{enumerate}
	\item[(A1)] if \((j,i)\in A\), then \((k,i)\in A\) or \((j,k)\in A\); and
	\item[(A2)] if \((k,i),(j,k)\in A\), then \((j,i)\in A\).
\end{enumerate}
Let \(A\subseteq A(\mathcal D_n)\) satisfy these conditions.  For
\(2\le j\le n\), set
\[
a_j
=
\#\{i<j:(i,j)\in A\}
+
\#\{i<j:(j,i)\in A\}.
\]
Take \(\Gamma=([n],A(\mathcal D_n)\setminus A)\).  Athanasiadis' directed
factorization gives the same product form for
\(\chi(\C_n^{1,1}(\Gamma),t)\), and comparison with
Theorem~\ref{main-2} gives the following formula.

\begin{prop}\label{prop:directed-athanasiadis-extraction}
	Let \(A\subseteq A(\mathcal D_n)\) satisfy conditions (A1) and (A2), and
	let \(\Gamma=([n],A(\mathcal D_n)\setminus A)\).  Then, for \(1\le k\le n\),
	\[
	\pathnum_k(\Gamma)
	=
	\frac{1}{(k-1)!}
	\sum_{q=0}^{k-1}
	(-1)^{k-1-q}
	\binom{k-1}{q}
	\prod_{j=2}^{n}(q+j-a_j).
	\]
	In particular, \(\pathnum_k(\Gamma)\) depends only on \(a_2,\dots,a_n\).
\end{prop}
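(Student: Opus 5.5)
This follows the proof of Proposition~\ref{prop:factorization-extraction-initial-intervals}, with the one-sided factorization replaced by Athanasiadis' directed factorization~\cite[Theorem~3.9]{athanasiadis1996}. The retained \(1\)-layer hyperplanes of \(\C_n^{1,1}(\Gamma)\) are exactly \(x_i-x_j=1\) for \((i,j)\in A\). Hence \(\C_n^{1,1}(\Gamma)\) is the braid arrangement together with these hyperplanes, which is the type of deformation Athanasiadis treats.

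The first step is to check that conditions (A1) and (A2) are his hypotheses, transcribed into our convention, where the arc \((i,j)\) encodes \(x_i-x_j=1\). The same transcription should turn his exponents into \(n-j+a_j+1\), with \(a_j\) counting the arcs of \(A\) between \(j\) and smaller vertices in either direction. This would give
\[
\chi(\C_n^{1,1}(\Gamma),t)=t\prod_{j=2}^{n}(t-n+j-a_j-1).
\]
I expect this translation to be the main obstacle. The orientation convention (the sign flip \(x\mapsto -x\) and the reversal of labels \(i\mapsto n+1-i\)) must be tracked carefully so that the product matches exactly. I would test the result on the two extremes. For \(A=\emptyset\) we have \(\Gamma=\mathcal D_n\), each \(a_j=0\), and the formula should reduce to the braid arrangement's \(t(t-1)\cdots(t-n+1)\). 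For \(A=A(\mathcal D_n)\) each \(a_j=2(j-1)\), and the formula should give the Catalan product \(t\prod_j (t-n-j)\).

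Next I would invoke Theorem~\ref{main-2}(2) with \(m=\ell=1\), where \(\lfloor 1/2\rfloor<1\) and \(\theta_n^{1,1}(k)=n+1\). This gives \(\chi(\C_n^{1,1}(\Gamma),t)=t\,P(\Gamma,t-n-1)\). Setting \(z=t-n-1\) and equating the two expressions yields the polynomial identity
\[
\sum_{k=1}^n \pathnum_k(\Gamma)\,z^{\underline{k-1}}=\prod_{j=2}^{n}(z+j-a_j).
\]

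Finally, I would extract the coefficients by Newton's forward-difference formula. If \(f(z)=\sum_k c_k z^{\underline{k-1}}\), then \(c_k=\Delta^{k-1}f(0)/(k-1)!\), and expanding \(\Delta^{k-1}\) binomially gives the stated alternating sum with \(f(q)=\prod_{j=2}^n(q+j-a_j)\). The final assertion is then immediate, because the right-hand side involves \(\Gamma\) only through \(a_2,\dots,a_n\).
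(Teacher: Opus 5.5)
Your proposal is correct and follows the paper's own argument: the paper also takes Athanasiadis' directed factorization theorem, with hypotheses transcribed as (A1)--(A2), to give \(\chi(\C_n^{1,1}(\Gamma),t)=t\prod_{j=2}^{n}(t-n+j-a_j-1)\). Like you, it then compares this with Theorem~\ref{main-2}(2) at \(m=\ell=1\) via \(z=t-n-1\) and extracts \(\pathnum_k(\Gamma)\) by Newton's forward-difference formula. One small point on your Catalan sanity check: with \(a_j=2(j-1)\) the product is \(t\prod_{j=1}^{n-1}(t-n-j)\), not \(t\prod_{j=2}^{n}(t-n-j)\).
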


\begin{proof}
	The directed factorization gives the same product expression in the variable
	\(z=t-n-1\) as in the preceding proposition.  Applying the same
	finite-difference extraction to the falling-factorial expansion from
	Theorem~\ref{main-2} gives the stated formula.
\end{proof}

\section{Ish-type applications}\label{sec-ish-type-applications}

The final section applies the same finite-field stratification to Ish-type
arrangements.  The first family extends \(G\)-Ish arrangements by allowing
directed indexing data, while the second gives a fixed-index variant with a
degree-based characteristic polynomial.

\subsection{Directed \texorpdfstring{\(G\)}{G}-Ish arrangements}\label{sec-directed-ish}

Armstrong and Rhoades introduced the \(G\)-Ish arrangement by adding the
hyperplanes \(x_1-x_j=i\) indexed by edges \(ij\in G\) with \(i<j\)
\cite{armstrong2012}.  The following digraph version recovers their arrangement
when \(G\) is oriented increasingly.  Its characteristic polynomial is expressed
through path-cycle covers of a complementary indexing digraph.  We write the
arrangement as \(\Ish(\Gamma)\).

	Let \(\Gamma=([n],A(\Gamma))\) be a digraph with loops allowed and with
	\(A(\Gamma)\subseteq [n]\times([n]\setminus\{1\})\).
	Define
	\[
	\Ish(\Gamma)
	=
	\mathcal B_n
	\cup
	\{x_1-x_j=i:(i,j)\in A(\Gamma)\}.
	\]

For the rest of this section, complements are taken in the ambient indexing set
of hyperplanes \(x_1-x_j=i\):
\[
A(\bar\Gamma(1))
=
\{(i,j):1\le i\le n,\ j\neq 1\}\setminus A(\Gamma),
\qquad
\bar\Gamma(1)
=
([n],A(\bar\Gamma(1))).
\]
This is an indexing digraph; the first coordinate is the translation parameter
and the second is the constrained coordinate.  Loops may occur, since
\((i,i)\) indexes \(x_1-x_i=i\).

Let \(\Gamma=([n],A(\Gamma))\) be a digraph with loops allowed.  We call a
spanning subgraph \(F=([n],S)\), with \(S\subseteq A(\Gamma)\), a
\emph{directed path-cycle cover} of \(\Gamma\) if each weakly connected
component of \(F\) is either a directed path or a directed cycle.  An isolated
vertex is regarded as a directed path of length \(0\), and a loop is regarded
as a directed cycle of length \(1\).  Let \(\pc_r(\Gamma)\) denote the number
of directed path-cycle covers with exactly \(r\) directed path components.

\begin{theorem}\label{thm:directed-ish-indexed}
	Let \(\Gamma=([n],A(\Gamma))\) be a digraph with loops allowed and with
	\(A(\Gamma)\subseteq [n]\times([n]\setminus\{1\})\).
	Then
	\[
	\chi(\Ish(\Gamma),t)
	=
	t\sum_{k=1}^{n}
	\pc_k(\bar\Gamma(1))
	(t-n-1)^{\underline{k-1}}.
	\]
\end{theorem}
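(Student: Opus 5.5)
We plan a direct finite-field count, in the spirit of the proofs of Theorems~\ref{main-1} and~\ref{main-2}, but using a change of coordinates instead of the cyclic block model. Let \(p\) be a sufficiently large prime, in particular \(p>n+1\). A point \(x\in M_p(\Ish(\Gamma))\) is determined by \(x_1\), which gives \(p\) free choices, together with the differences \(y_j=x_1-x_j\) for \(j\ge2\); put \(y_1=0\). The braid hyperplanes say exactly that \(y_1,\dots,y_n\) are pairwise distinct, so \(y_j\ne0\) for \(j\ge2\). The Ish hyperplanes say that \(y_j\ne i\) whenever \((i,j)\in A(\Gamma)\). Thus \(\#M_p(\Ish(\Gamma))=p\cdot Y\), where \(Y\) counts injective assignments \(j\mapsto y_j\in\mathbb F_p\setminus\{0\}\) on \(\{2,\dots,n\}\) satisfying these avoidance conditions.

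We stratify the \(y\)'s according to which of them land in \([n]\subseteq\mathbb F_p\); since \(p>n\), the integers \(1,\dots,n\) are distinct nonzero residues. For an admissible assignment, let
\[
S(y)=\{(y_j,j): j\ge2,\ y_j\in[n]\}.
\]
By the avoidance condition, \(S(y)\subseteq A(\bar\Gamma(1))\). In the digraph \(([n],S(y))\), the degrees are controlled as follows.
\begin{itemize}
\item[(a)] Each vertex \(j\) has indegree at most one, because it has a single value \(y_j\).
\item[(b)] Each vertex \(i\) has outdegree at most one, because \(y\) is injective.
\item[(c)] Vertex \(1\) has indegree zero, because arcs only enter vertices \(j\ne1\).
\end{itemize}
Hence \(([n],S(y))\) is a directed path-cycle cover of \(\bar\Gamma(1)\), with loops \((i,i)\) arising when \(y_i=i\). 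Moreover, vertex \(1\) is always the initial vertex of a path component.

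Conversely, fix a path-cycle cover \(S\) of \(\bar\Gamma(1)\) with \(k\) path components. We must count the assignments with \(S(y)=S\).
\begin{itemize}
\item[(i)] Every vertex \(j\ne1\) with an incoming arc \((i,j)\in S\) is forced to take \(y_j=i\). These values are distinct by the outdegree bound, lie in \([n]\setminus\{0\}\), and avoid the arcs of \(\Gamma\).
\item[(ii)] The vertices without incoming arcs are exactly the \(k\) initial vertices of the paths. One of them is vertex \(1\), with \(y_1=0\) fixed.
\item[(iii)] Each of the remaining \(k-1\) initial vertices must take a value outside \(\{0\}\cup[n]\), since otherwise it would acquire an incoming arc not in \(S\), or equal \(0\). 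These values must be pairwise distinct, and no further Ish condition applies to them.
\end{itemize}
So these \(k-1\) vertices receive distinct values from a set of size \(p-n-1\), which gives \((p-n-1)^{\underline{k-1}}\) choices. Summing over covers yields
\[
\#M_p(\Ish(\Gamma))=p\sum_{k=1}^n \pc_k(\bar\Gamma(1))\,(p-n-1)^{\underline{k-1}}.
\]
Proposition~\ref{prop:finite-field-method} then gives the identity for all large primes \(p\), hence as a polynomial identity in \(t\).

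The main point needing care is the bijection between strata and path-cycle covers. The equality \(S(y)=S\) must hold exactly, and this is enforced by forcing the free initial vertices outside \([n]\). We must also check the bookkeeping around vertex \(1\): it can never lie on a cycle, it always counts as one of the \(k\) paths, and its value is fixed rather than free, which accounts for the exponent \(k-1\) instead of \(k\). Loops and cycles need no special treatment beyond noting that cycles contain no initial vertex, so they contribute no free choices.
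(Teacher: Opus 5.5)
Your proof is correct and is essentially the paper's argument. Your set \(S(y)\) is exactly the set \(A_X\) of complementary equations \(x_1-x_j=i\), \((i,j)\in A(\bar\Gamma(1))\), that the paper uses to index the restriction stratum of \(\A_{\bar\Gamma(1)}\) containing the point, and your count in the coordinates \(y_j=x_1-x_j\) is the paper's box count: \(p\) choices for \(x_1\), forced values for the heads of arcs, and \((p-n-1)^{\underline{k-1}}\) for the other \(k-1\) path-initial vertices. Because you partition points directly by \(S(y)\) instead of going through \(L(\A_{\bar\Gamma(1)})\), you avoid the paper's separate check that \(A_X=S\) for \(X=\bigcap_{(i,j)\in S}\{x_1-x_j=i\}\).
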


\begin{proof}
	Let \(p\) be a sufficiently large prime with \(p>2n\).  By the finite field method, it is
	enough to compute \(\#M_p(\Ish(\Gamma))\).  Let
	\[
	\A_{\bar\Gamma(1)}
	=
	\{x_1-x_j=i:(i,j)\in A(\bar\Gamma(1))\}.
	\]
	By the decomposition \eqref{decomposition}, after reduction modulo \(p\) we
	have
	\begin{equation}\label{condition}
	M_p(\Ish(\Gamma))
	=
	\bigsqcup_{X\in L(\A_{\bar\Gamma(1)})}
	\left(
	M_p(\Ish(\Gamma))\cap M_p((\A_{\bar\Gamma(1)})^X)
	\right).
	\end{equation}
	For \(X\in L(\A_{\bar\Gamma(1)})\), set
	\[
	A_X
	=
	\{(i,j)\in A(\bar\Gamma(1)):
	X\subseteq \{x_1-x_j=i\}\}.
	\]
	By Lemma~\ref{lem:restriction-strata}, for any point in the summand indexed
	by \(X\), among the equations from \(\A_{\bar\Gamma(1)}\) we have the exact
	condition
	\[
	x_1-x_j=i
	\quad\Longleftrightarrow\quad
	(i,j)\in A_X
	\qquad ((i,j)\in A(\bar\Gamma(1))).
	\]
	
	We next determine the contributing \(A_X\)'s.  If \(A_X\) contains two arcs
	with the same tail, say \((i,j)\) and \((i,k)\) with \(j\neq k\), then the two
	equations place \(j\) and \(k\) in the same box labeled by \(x_1-i\),
	contradicting the braid arrangement.  If \(A_X\) contains two arcs with the
	same head, say \((i,j)\) and \((i',j)\) with \(i\neq i'\), then \(j\) is forced into two
	different boxes, since \(1\le i,i'\le n<p\).  Hence every vertex has indegree and outdegree at most one in
	\(([n],A_X)\).  Thus \(([n],A_X)\) is a disjoint union of directed paths and
	directed cycles, with isolated vertices allowed.  Since no arc in the ambient
	indexing set ends at \(1\), the component containing \(1\) is a directed path.
	
	Conversely, let \(S\subseteq A(\bar\Gamma(1))\) be a spanning directed
	path-cycle cover, and put
	\[
	X=\bigcap_{(i,j)\in S}\{x_1-x_j=i\}.
	\]
	Then \(A_X=S\).  Indeed, \(S\subseteq A_X\) is immediate.  Conversely,
	suppose \((i,j)\in A_X\).  If no arc of \(S\) has head \(j\), then \(x_j\) is
	free in the defining equations of \(X\), so \(X\not\subseteq\{x_1-x_j=i\}\).
	Hence \(S\) has an arc with head \(j\).  Since \(S\) is a path-cycle cover,
	this arc is unique, say \((i_0,j)\).  Then \(X\) forces
	\(x_1-x_j=i_0\); as \(1\le i,i_0\le n<p\), the containment
	\(X\subseteq\{x_1-x_j=i\}\) gives \(i=i_0\).  Thus \((i,j)\in S\), and
	\(A_X=S\).  Hence the summands in \eqref{condition} to be counted are exactly those indexed by
	directed path-cycle covers of \(\bar\Gamma(1)\).
	
	Having identified the contributing \(X\)'s, fix one for which
	\(([n],A_X)\) has \(k\) directed path components.  Choose the box occupied
	by \(1\), giving \(p\) choices.  For each arc \((i,j)\in A_X\), the vertex
	\(j\) must occupy the box \(i\) steps counterclockwise from the box occupied
	by \(1\).  Since \(A_X\) has at most one arc with any given head, no vertex is
	prescribed twice; since it has at most one arc with any given tail and the
	boxes \(x_1-i\) for \(1\le i\le n\) are distinct modulo \(p\), no two
	prescribed vertices occupy the same box.  Hence the only vertices not yet
	placed are the initial vertices of directed path components; the one in the
	component containing \(1\) is already placed, so exactly \(k-1\) vertices
	remain.
	
	Let \(j\) be one of these remaining vertices.  For each \(1\le i\le n\), the
	box \(i\) steps counterclockwise from the box occupied by \(1\) is forbidden:
	if \((i,j)\in A(\Gamma)\), occupying it is forbidden by \(\Ish(\Gamma)\), while
	if \((i,j)\in A(\bar\Gamma(1))\), then \((i,j)\notin A_X\) because \(j\) has
	indegree zero, so occupying it is forbidden by the exact condition for
	\(\A_{\bar\Gamma(1)}\).  The box occupied by \(1\) is also forbidden by the
	braid arrangement.  Since \(p>2n\), these \(n+1\) forbidden boxes are distinct.
	Thus the \(k-1\) remaining vertices are placed injectively in
	\(p-n-1\) available boxes.  This fixed \(X\) contributes
	\[
	p(p-n-1)^{\underline{k-1}}.
	\]
	Summing over all directed path-cycle covers of \(\bar\Gamma(1)\), we obtain
	\[
	\#M_p(\Ish(\Gamma))
	=
	p\sum_{k=1}^{n}
	\pc_k(\bar\Gamma(1))
	(p-n-1)^{\underline{k-1}}.
	\]
	By the finite field method, the same formula holds for
	\(\chi(\Ish(\Gamma),p)\) for all sufficiently large primes \(p\).  Hence it
	holds as a polynomial identity after replacing \(p\) by \(t\).
\end{proof}

This formula is compatible with the Shi--Ish formulas of Armstrong and
Rhoades~\cite{armstrong2012}.  If
\(A(\Gamma_{\mathrm{Ish}})=\{(i,j):1\le i<j\le n\}\), then
\(\Ish(\Gamma_{\mathrm{Ish}})\) is the classical Ish arrangement, and
Theorem~\ref{thm:directed-ish-indexed} specializes to the characteristic polynomial
\(t(t-n)^{n-1}\).  More generally, when \(A(\Gamma)\) is the increasing
orientation of a simple graph \(G\), \(\Ish(\Gamma)\) is the corresponding
\(G\)-Ish arrangement; the formula above expresses its characteristic
polynomial through directed path-cycle covers of the indexing digraph
\(\bar\Gamma(1)\).

\subsection{Fixed-index Ish arrangements}\label{sec-fixed-first-index-ish}

This final family is independent of the directed \(G\)-Ish extension above.  For
a fixed \(r\in[n]\), the pair \((a,j)\in[n]\times([n]\setminus\{r\})\) indexes
the deleted hyperplane \(x_r-x_j=a\).  Thus loops \((a,a)\) are allowed when
\(a\ne r\), but no arc has head \(r\).

	Let \(n\) be a positive integer, let \(r\in[n]\), and let
	\(\Gamma=([n],A(\Gamma))\) be a digraph with
	\(A(\Gamma)\subseteq[n]\times([n]\setminus\{r\})\).  Define
	\[
	\mathcal J_r(\Gamma)
	=
	\mathcal C_n^n
	\setminus
	\{x_r-x_j=a:(a,j)\in A(\Gamma)\}.
	\]

For such a digraph \(\Gamma=([n],A(\Gamma))\) and \(a\in[n]\), let
\[
\deg^+_{\Gamma}(a)
=
\#\{j\in[n]\setminus\{r\}:(a,j)\in A(\Gamma)\}
\]
be the outdegree of \(a\) in \(\Gamma\).

For \(n=1\), the arrangement \(\mathcal J_1(\Gamma)\) is empty and has
characteristic polynomial \(t\).  The nontrivial case is \(n\ge2\).

\begin{theorem}\label{thm:fixed-first-index-ish}
	Let \(n\ge2\), let \(r\in[n]\), and let
	\(\Gamma=([n],A(\Gamma))\) be a digraph with
	\(A(\Gamma)\subseteq[n]\times([n]\setminus\{r\})\).  Then
	\[
	\chi(\mathcal J_r(\Gamma),t)
	=
	t(t-n^2-1)^{\underline{n-1}}
	+
	t\sum_{a=1}^{n}
	\deg^+_{\Gamma}(a)
	(t-n^2+n-a-1)^{\underline{n-2}}.
	\]
\end{theorem}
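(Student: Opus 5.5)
The plan is to read $\mathcal J_r(\Gamma)$ as a multilayer deletion and apply Theorem~\ref{thm:simultaneous-translated-deletions}. Take $m=n$ and $S=[n]$. For each $s\in S$, let $\Gamma_s$ be the simple digraph with arc set $A_s=\{(r,j):(s,j)\in A(\Gamma)\}$. Then $\mathcal J_r(\Gamma)=\C_n^{n,S}(\Gamma_\bullet)$. A deleted gain-labeled arc is therefore a triple $(r,j;a)$ with $(a,j)\in A(\Gamma)$; its tail is always $r$ and its head is some $j\ne r$. So the whole proof reduces to listing the admissible sets $F$ of Definition~\ref{def:admissible-gain-labeled-arcs} and reading off $c(F)$ and $W(F)$.

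The key step is to show that every admissible $F$ has at most one arc, and that every single arc is admissible. I will check three cases.
\begin{itemize}
\item[(i)] Two triples $(r,j;a)$ and $(r,j;a')$ with $a\ne a'$ cannot both lie in $F$. Condition (1) would force $\phi(r)-\phi(j)$ to equal both $a$ and $a'$.
\item[(ii)] Two triples $(r,j;a)$ and $(r,j';a')$ with $j\ne j'$ put $j$ and $j'$ in one component. There $\phi(j)-\phi(j')=a'-a$. If $a=a'$, this violates condition (2). Otherwise $0<|a'-a|\le n-1\le m$, so one orientation of the difference lies in $[1,m]$. Condition (3) then demands a deleted arc whose tail is $j$ or $j'$. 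This is impossible, because every deleted arc has tail $r\notin\{j,j'\}$.
\item[(iii)] A single triple $(r,j;a)$ is admissible. The potential takes the two distinct values $\phi(r)=a$ and $\phi(j)=0$. The only difference in $[1,m]$ is $\phi(r)-\phi(j)=a$, and this arc is already in $F$.
\end{itemize}
Together with $F=\emptyset$, these are all the admissible sets. I expect case (ii) to be the only real obstacle. The essential points there are that the gains all lie in $[1,n]$, so their differences stay within the closure range $[1,m]$ with $m=n$, and that no deleted arc starts outside $r$.

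Finally I plug into the formula. For $F=\emptyset$ we have $c(F)=n$ and $W(F)=0$, which gives the term $t(t-n^2-1)^{\underline{n-1}}$. For $F=\{(r,j;a)\}$ we have $c(F)=n-1$ and $W(F)=a$. The shift is $a+n(n-1)+1$, giving $t(t-n^2+n-a-1)^{\underline{n-2}}$. For fixed $a$, the number of such $F$ is $\#\{j:(a,j)\in A(\Gamma)\}=\deg^+_\Gamma(a)$. Summing over $a\in[n]$ yields the stated formula. The hypothesis $n\ge2$ guarantees $m=n\ge1$, as Theorem~\ref{thm:simultaneous-translated-deletions} requires, and makes the falling factorial with $c(F)-1=n-2\ge0$ meaningful.
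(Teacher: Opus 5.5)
Your proof is correct, but it is organized differently from the paper's. The paper never invokes Theorem~\ref{thm:simultaneous-translated-deletions} for this result. Instead, it works directly with the set \(S\) of deleted equations \(x_r-x_j=a\) that hold at a point of \(M_p(\mathcal J_r(\Gamma))\). It shows that \(S\) is empty or a single arc, and then counts each case with Lemma~\ref{lem:cyclic-gap-count}. In the single-arc case it builds the rigid block \(\{r,j\}\) of length \(a+1\) by hand, checks that the \(a-1\) interior boxes must stay empty, and computes \(N=n^2+a-1\). Your cases (i)--(iii) are exactly the paper's three exclusion arguments, rephrased as conditions (1)--(3) of Definition~\ref{def:admissible-gain-labeled-arcs}: a common head forces one vertex into two boxes; a common gain violates a braid hyperplane; and different gains create a retained Catalan difference between \(j\) and \(j'\), because only hyperplanes of the form \(x_r-x_h=b\) are deleted. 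What your route adds is the identification \(\mathcal J_r(\Gamma)=\C_n^{n,[n]}(\Gamma_\bullet)\) with \(A_s=\{(r,j):(s,j)\in A(\Gamma)\}\). The block-and-gap bookkeeping is then inherited from the multilayer theorem, and the proof reduces to reading off \(c(F)\) and \(W(F)\). The paper's route is self-contained and avoids the admissibility machinery, at the cost of redoing a special case of that count.
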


\begin{proof}
	Let \(p\) be a sufficiently large prime, chosen in particular so that
	\(p>n(n+1)\).  By the finite field method, it is
	enough to compute \(\#M_p(\mathcal J_r(\Gamma))\).  For a point in this
	complement, let
	\[
	S=\{(a,j)\in A(\Gamma):x_r-x_j=a\}
	\]
	be the set of deleted equations that hold.
	
	We first determine the possible \(S\)'s.  Suppose \(S\) contains two distinct
	arcs \((a,j)\) and \((a',j')\).  If \(j=j'\), then \(a\ne a'\), and the same vertex is forced
	to occupy two different boxes since \(1\le a,a'\le n<p\).  If \(j\ne j'\), then
	\(x_j-x_{j'}=a'-a\).  For \(a=a'\) this violates the braid arrangement; for
	\(a\ne a'\), since \(|a'-a|\le n-1<p\), one of \(x_j-x_{j'}\) and \(x_{j'}-x_j\) lies in
	\(\{1,\ldots,n-1\}\).  Since \(\mathcal J_r(\Gamma)\) deletes only
	hyperplanes of the form \(x_r-x_h=b\), and since \(j,j'\ne r\), this Catalan
	hyperplane remains in \(\mathcal J_r(\Gamma)\).  Hence \(S\) is either empty
	or a single arc.
	
	If \(S=\emptyset\), then any two consecutive occupied boxes in cyclic order
	must have at least \(n\) empty boxes between them.  Thus the placement
	consists of \(n\) singleton blocks, each followed by \(n\) empty boxes.
	Lemma~\ref{lem:cyclic-gap-count},
	with \(k=n\) and \(N=n(n+1)<p\), gives
	\[
	p(p-n^2-1)^{\underline{n-1}}
	\]
	points.
	
	Now let \(S=\{(a,j)\}\).  The vertices \(r\) and \(j\) form one rigid block:
	once the box occupied by \(r\) is chosen, \(j\) occupies the box \(a\) steps
	counterclockwise from it.  Thus this block has length \(a+1\).  Its \(a-1\)
	intermediate boxes must remain empty: an additional vertex there would either
	meet a retained Catalan hyperplane or satisfy a second deleted equation,
	contrary to the definition of \(S\).  The remaining
	\(n-2\) vertices form singleton blocks.  These \(n-1\) blocks must be
	separated cyclically by \(n\) empty boxes.  Therefore the total mandatory
	length is
	\[
	N=(a+1)+(n-2)+n(n-1)=n^2+a-1.
	\]
	Since \(a\le n\), we have \(N<n(n+1)<p\).
	There are \(n-1\) blocks, so Lemma~\ref{lem:cyclic-gap-count} gives
	\[
	p(p-N+n-2)^{\underline{n-2}}
	=
	p(p-n^2+n-a-1)^{\underline{n-2}}.
	\]
	For fixed \(a\), there are \(\deg^+_{\Gamma}(a)\) choices of \(j\).  Summing
	over \(a\in[n]\) and adding the contribution from \(S=\emptyset\), we get
	\[
	p(p-n^2-1)^{\underline{n-1}}
	+
	p\sum_{a=1}^{n}
	\deg^+_{\Gamma}(a)
	(p-n^2+n-a-1)^{\underline{n-2}}.
	\]
	Replacing \(p\) by \(t\) gives the stated characteristic polynomial.
\end{proof}

\phantomsection
\section*{Acknowledgements}

This work is supported by the National Natural Science Foundation of China
(Grant No.~12571350) and the Guangdong Basic and Applied Basic Research
Foundation (Grant No.~2025A1515010457).

\section*{AI Use Statement}

During the preparation of this manuscript, the authors used OpenAI’s ChatGPT and Codex to assist with language editing, computational checks, and the organization of the exposition. All mathematical results, proofs, and conclusions presented as contributions in this manuscript are the authors’ original work. The authors reviewed and revised all AI-assisted output and take full responsibility for the content of the manuscript.

\end{document}